\newtheorem{theorem}{Theorem}[section]
\newtheorem*{Acknowledgement}{\textnormal{\textbf{Acknowledgement}}}
\theoremstyle{definition}
\newtheorem{definition}[theorem]{Definition}
\newtheorem{example}[theorem]{Example}
\newtheorem{corollary}[theorem]{Corollary}
\newtheorem{proposition}[theorem]{Proposition}
\newtheorem{remark}[theorem]{Remark}
\numberwithin{equation}{section}
\newcommand{\beqa}{\begin{eqnarray*}}
	\newcommand{\eeqa}{\end{eqnarray*}}
\newcommand{\beqn}{\begin{eqnarray}}
	\newcommand{\eeqn}{\end{eqnarray}}
\newcommand{\ci}{\subseteq}
\renewcommand{\a}{\alpha}
\newcommand{\e}{\varepsilon}
\newcounter{cnt1}
\newcounter{cnt2}
\newcounter{cnt3}
\newcommand{\blr}{\begin{list}{$($\roman{cnt1}$)$}
		{\usecounter{cnt1} \setlength{\topsep}{0pt}
			\setlength{\itemsep}{0pt}}}
	\newcommand{\bla}{\begin{list}{$($\alph{cnt2}$)$}
			{\usecounter{cnt2} \setlength{\topsep}{0pt}
				\setlength{\itemsep}{0pt}}}
		\newcommand{\bln}{\begin{list}{$($\arabic{cnt3}$)$}
				{\usecounter{cnt3} \setlength{\topsep}{0pt}
					\setlength{\itemsep}{0pt}}}
			\newcommand{\el}{\end{list}}
		\newtheorem{thm}{Theorem}
		\newtheorem{Def}[thm]{Definition}
		\newtheorem{rem}[thm]{Remark}
		\newcommand{\Rem}{\begin{rem} \rm}
			\newcommand{\bdfn}{\begin{Def} \rm}
				\newcommand{\edfn}{\end{Def}}
			\title{Small Diameter  Properties In Ideals of Banach Spaces}
			\author[ S. Basu , S. Seal ]
			{Sudeshna Basu$^{1}$, Susmita Seal$ ^{2}$ }
			\address{{$^{1}$}   Sudeshna Basu,
				Department of Mathematics, 
				Ram Krishna Mission Vivekananda Education and Research Institute , 
				Belur Math,  Howrah 711202
				West Bengal, India and
				Department of Mathematics,
				George Washington University,
				Washington DC 20052 USA 
			}
			\email{sudeshnamelody@gmail.com}
			\address {{$^{2}$} Susmita Seal, 
				Department of Mathematics, ,
				Ram Krishna Mission Vivekananda Education and Research Institute , 
				Belur Math,  Howrah 711202,
				West Bengal, India}
			\email{susmitaseal1996@gmail.com}
			\subjclass{46B20, 46B28}
			\keywords{Slices , M-Ideals , Strict ideals , Huskable, Denting , Dentable , Small Combination of Slices.}
			\date{}
\begin{document}
\maketitle
\begin{abstract}
	A Banach space has the ball huskable  property ($BHP$) if the closed unit ball  has weakly open sets  of arbitrarily small diameter. We can analogously define $w^*$-$BHP$  in the  dual space. In this short note, we study these properties in the context of  ideals in Banach spaces. The notion of an ideal,
	was introduced by Godefroy, Kalton and Saphar in \cite{GKS}. We show that if a Banach space $X$ has $BHP,$ then any 
	$M$-ideal of $X$ also has $BHP.$ We further show that if $Y$ is an $M$-ideal of $X,$ then $Y^*$ has $w^*$-$BHP$ implies $X^*$ has $w^{*}$-$BHP.$
	We use this result to prove that for a compact Hausdorff space $K$ which has an isolated point, $X$ has $BHP$ whenever $C(K,X)$ has $BHP$ and  $X^*$ has $w^{*}$-$BHP$  implies $C(K,X)^*$ has $w^{*}$-$BHP.$
	We also prove that $w^*$-$BHP$ can be lifted from $Y^*$ to $X^*$ provided $Y$ is a strict ideal of $X$.
	Lastly, we show that if $Y$ is an almost isometric ideal of $X,$ then $BHP$ can be lifted from $Y$ to $X.$
	We obtain similar results for ball dentable property ($BDP$) and ball small combination of slices Property ($BSCSP$) as well.
\end{abstract}

\section{Introduction}

Let $X$ be a {\it real} nontrivial Banach space and $X^*$ its dual. We will denote by $B_X$, $S_X$ and $B_X(x, r)$ the closed unit ball, the unit sphere and the closed ball of radius $r >0$ and center $x$. We refer to the monograph \cite{B1} for notions of convexity theory that we will be using here.

\bdfn
\blr
\item We say $A \ci B_{X^*}$ is a norming set for $X$ if $\|x\| =
\sup\{|x^*(x)| : x^* \in A\}$, for all $x \in X$. A closed subspace $F
\ci X^*$ is a norming subspace if $B_F$ is a norming set for $X$.
\item Let $f \in X^*$, $\a > 0$ and $ C \subseteq X$.
Then the set $S(C, f, \a) = \{x \in C : f(x) > \mbox{sup}~ f(C) - \a \}$ is called the open slice determined by $f$ and
$\a.$  We assume without loss of generality that $\|f\| = 1$. One can analogously define $w^*$ slices in $X^*$.
\el
\edfn

We recall the following definitions from \cite{BS}. The notions of dentability, huskability and small combination of slices in closed bounded convex sets were studied in detail in \cite{GGMS} and \cite{GMS}.    

\begin{definition} \label{def bdp} A Banach space $X$ has 
	
	\begin{enumerate}
		\item { \it Ball Dentable Property} ($BDP$) if the closed unit ball has slices of arbitrarily small diameter. 
		\item {\it  Ball Huskable Property}($BHP$) if the unit ball has a relatively weakly open subset of arbitrarily small diameter.
		\item {\it Ball Small Combination of Slice Property} ($BSCSP$) if the unit ball has a convex combination of slices of arbitrarily small diameter.
	\end{enumerate} 
	
\end{definition}
\begin{remark}
Analogously, we can define $w^{*}$-$BDP$, $w^{*}$-$BHP$ and $w^*$-$BSCSP$ in a dual space by taking $w^*$slices, $w^*$-open sets and $w^*$- small combination of slices respectively. 
\end{remark}
It is easy to observe the following :
$$ BDP \Longrightarrow \quad BHP \Longrightarrow \quad  BSCSP$$ $\quad \quad \quad \quad \quad \quad \quad \quad\quad\quad\quad\quad\quad\quad\quad\quad \Big \Uparrow \quad \quad\quad\quad\quad \Big \Uparrow \quad \quad\quad\quad\quad \Big \Uparrow$  $$ w^*BDP \Longrightarrow  w^*BHP \Longrightarrow  w^*BSCSP$$
In general, none of the reverse implications hold good. For details, see \cite{BS}.
We also have  the following result from \cite{BS} which we will need in our subsequent discussion. 

\begin{proposition} \label{A1}
	\cite{BS} A Banach space $X$ has $BDP$ (resp. $BHP$ , $BSCSP$) if and only if $X^{**}$ has $w^*$-$BDP$ (resp. $w^*$-$BHP$, $w^*$-$BSCSP$). 
\end{proposition}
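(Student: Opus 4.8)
The plan is to show that slices, relatively weakly open sets, and convex combinations of slices of $B_X$ stand in exact correspondence with their $w^*$-analogues in $B_{X^{**}}$, with diameters preserved. Two facts drive everything: Goldstine's theorem, so that $B_X$ is $w^*$-dense in $B_{X^{**}}$, and the observation that the relative $w^*$-topology induced by $X^{**}$ on the canonical copy of $X$ is precisely the weak topology $\sigma(X,X^*)$ of $X$. I will also use repeatedly the elementary lemma that for any bounded $A \ci X^{**}$ one has $\mathrm{diam}\, A = \mathrm{diam}\, \ov{A}^{w^*}$. This holds because the norm of $X^{**}$ is a supremum of the $w^*$-continuous functionals $\phi \mapsto f(\phi)$, $f \in B_{X^*}$, hence is $w^*$-lower semicontinuous; consequently $(\phi,\psi)\mapsto\|\phi-\psi\|$ is jointly $w^*$-lower semicontinuous, and passing to $w^*$-limits of nets drawn from $A$ cannot increase pairwise distances.

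First I would treat $BDP$. Fix $f \in S_{X^*}$ and $\a > 0$. Since $\sup f(B_X) = \|f\| = \sup f(B_{X^{**}})$, the slices $S(B_X,f,\a)$ and $S(B_{X^{**}},f,\a)$ are cut at the same level, and clearly $S(B_X,f,\a) \ci S(B_{X^{**}},f,\a)$. For the reverse inclusion up to $w^*$-closure, take $\phi \in S(B_{X^{**}},f,\a)$; by Goldstine there is a net $(x_\b)$ in $B_X$ with $x_\b \to \phi$ in the $w^*$-topology, and since $f$ is $w^*$-continuous, $f(x_\b) \to f(\phi) > \|f\| - \a$, so $x_\b \in S(B_X,f,\a)$ eventually. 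Hence $S(B_{X^{**}},f,\a) \ci \ov{S(B_X,f,\a)}^{w^*}$, and the diameter lemma yields $\mathrm{diam}\, S(B_{X^{**}},f,\a) = \mathrm{diam}\, S(B_X,f,\a)$. As a $w^*$-slice of $B_{X^{**}}$ is exactly some $S(B_{X^{**}},f,\a)$ with $f \in X^*$, the existence of slices of arbitrarily small diameter transfers in both directions. The $BSCSP$ case runs identically: given $\sum_i \la_i S(B_{X^{**}},f_i,\a_i)$, approximate each constituent $w^*$-slice point by points of $S(B_X,f_i,\a_i)$ as above, and use that a finite convex combination is $w^*$-continuous (pass to the product directed set of the individual nets) to place the combination inside $\ov{\sum_i \la_i S(B_X,f_i,\a_i)}^{w^*}$; the diameter lemma again equates the two diameters.

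For $BHP$ the correspondence is even more direct. If $X$ has $BHP$, take a basic relatively weakly open set $W = \{x \in B_X : |f_i(x) - a_i| < r,\ i \le n\}$ of diameter $< \e$ with $f_i \in X^*$, and set $V = \{\phi \in B_{X^{**}} : |f_i(\phi) - a_i| < r,\ i \le n\}$, which is relatively $w^*$-open in $B_{X^{**}}$ and satisfies $V \cap X = W$. The same Goldstine-plus-$w^*$-continuity argument gives $V \ci \ov{W}^{w^*}$, whence $\mathrm{diam}\, V \le \mathrm{diam}\, W < \e$, so $X^{**}$ has $w^*$-$BHP$. Conversely, if $V$ is a nonempty relatively $w^*$-open subset of $B_{X^{**}}$ of diameter $< \e$, then $V \cap X \neq \es$ because $B_X$ is $w^*$-dense in $B_{X^{**}}$; since the relative $w^*$-topology on $X$ coincides with its weak topology, $V \cap X$ is a nonempty relatively weakly open subset of $B_X$ with $\mathrm{diam}(V \cap X) \le \mathrm{diam}\, V < \e$, so $X$ has $BHP$. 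The one point demanding care throughout is the interplay between $w^*$-density and the defining functionals — namely that the $w^*$-closure of a slice or weak neighborhood of $B_X$ recovers the full $w^*$-object of $B_{X^{**}}$ without enlarging its diameter, and, in the $BHP$ converse, that $V\cap X$ is nonempty at all; once the diameter lemma and Goldstine density are in hand, the rest is routine bookkeeping.
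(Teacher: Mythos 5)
Your proof is correct. The paper itself gives no argument for this proposition --- it is quoted from \cite{BS} --- and your route (Goldstine's theorem, the coincidence of the relative $w^*$-topology on the canonical copy of $X$ with $\sigma(X,X^*)$, and the $w^*$-lower semicontinuity of the bidual norm giving $\mathrm{diam}\,A=\mathrm{diam}\,\overline{A}^{w^*}$ for bounded $A$, so that slices, basic weak neighborhoods and convex combinations of slices of $B_X$ have the same diameter as their $w^*$-counterparts in $B_{X^{**}}$) is precisely the standard argument used there, so there is nothing to flag beyond the routine reduction, which you implicitly make, from an arbitrary small relatively weakly open set to a basic one.
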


In this work, we study these properties in ideals of Banach spaces. The notion of ideals in Banach spaces,
 introduced by Godefroy, Kalton and Saphar in \cite {GKS}, continues to be a very important concept in geometry of Banach space. We include here some of the references relevant to our work, namely, \cite{HWW}, \cite{W}, \cite{R1}, \cite{KKO},\cite{HL} and \cite{ALN}.

 We recall the following definitions and results which we need in our discussion:

 \begin{definition}Let $Y\subseteq X$ be a subspace of $X.$ The annihilator of $Y$ in the dual space  $X^*$ is the subspace of $X^*$ defined by 
 $Y^\perp = \Big \{ x^*\in X^* : x^*(y) = 0\quad \forall y\in Y \Big \}$
 \end{definition}
\begin{definition}\cite{GKS}
Let $Y$ be a closed subspace of $X.$ Then  $Y$ is said to be an  ideal in $X$ if $Y^{\perp}$ is the kernel of a norm one projection in $X^*.$
\end{definition}
 \begin{remark}
 	\begin{enumerate}
 		\item Suppose $P: X^\ast \rightarrow X^\ast$ is projection 
 		with $\|P\| = 1$, $ker(P) = Y^\bot.$ For $x^* \in X^*$ since $P(x^*)-x^*=0$ on $Y$ , as $\|P\|=1$ , we see that $P(x^*)$ is a norm preserving extension of $x^*|_Y.$
 
 \item If we look at $X$ as an embedding in $X^{**}$ via the map $J :X \rightarrow  X^{**}$ defined by
 $J(x)(x^{*}) = x^{*}(x),$ 
 the natural projection on $X^{***}$ by $\Gamma \rightarrow \Gamma|_X$ is a norm one linear projection with kernel $X^\perp.$ Thus $X$ is isometric to an ideal in $X^{**}$ via this canonical embedding $J.$ 
\end{enumerate}
 \end{remark}
  
\begin{definition}
	Let $X$ be a Banach space. A linear projection $P$ on $X$ is called 
	\begin{enumerate}
		\item an $L$-projection if $\Vert x \Vert = \Vert Px \Vert + \Vert x-Px \Vert$ for all $x \in X.$
		\item an $M$-projection if $\Vert x \Vert = Max \{\Vert Px \Vert, \Vert x-Px \Vert \}$ for all $x \in X.$
	\end{enumerate}
	
\end{definition}

\begin{definition}
	Let , $X$ be a Banach space . A closed subspace $Y \subset X$ is called an $M$-ideal if there exists an $L$-projection $P$ : $X^* \rightarrow X^*$ with Ker $P$ = $Y^\perp$  
\end{definition}

We recall from Chapter I of \cite{HWW} that when $Y \subset X$ is an
$M$-ideal, elements of $Y^\ast$ have unique norm-preserving
extension to $X^\ast$ and one has the identification, $X^\ast =
Y^\ast \oplus_1 Y^\perp$.
Several examples from among
function spaces and spaces of operators that satisfy these geometric
properties can be found in the monograph \cite{HWW}. See also \cite{KKO}.

\begin{definition} An ideal $Y \subseteq X $ is said to
	be a strict ideal if for a projection $P: X^\ast \rightarrow X^\ast$
	with $\|P\| = 1$, $ker(P) = Y^\bot$, one also has  $B_{P(X^\ast)}$ is $w^*$-dense in $B_{X^*}$ or
	in other words $B_{P(X^*)}$ is a norming set for $X.$ 
\end{definition}
In the case of an
ideal also one has that $Y^\ast$ embeds (though there may not be
uniqueness of norm-preserving extensions) as $P(X^\ast)$. Thus we
continue to write $X^* = Y^* \oplus Y^\perp.$ In what follows we use
a result from \cite{R1}, that identifies strict ideals as those for
which $Y \subset X \subset Y^{\ast\ast}$ under the canonical
embedding of $Y$ in $Y^{\ast\ast}.$
A prime example of a strict ideal is a Banach space $X$ under its canonical embedding in $X^{\ast\ast}.$

\begin{definition}
An operator $f : Y^* \rightarrow X^*$ is called a  Hahn Banach extension operator  if it satisfies following two conditions : 
\begin{enumerate}
\item $(fy^*)|_Y = y^*$ $\forall y^* \in Y^*$
\item $\Vert fy^* \Vert = \Vert y^* \Vert $ $\forall y^* \in Y^*$
\end{enumerate}
\end{definition}
\begin{definition}
A subspace $Y$ of $X$ is called  an almost isometric ideal ($ai$-ideal) in $X$ if for every $\varepsilon >0$ and every finite dimensional subspace $E\subset X$ there exists $T: E\rightarrow Y$ such that followings hold :
\begin{enumerate}
\item $Te = e $  $\forall e \in E\bigcap Y$
\item $\frac{1}{1+\varepsilon} \Vert e \Vert \leqslant \Vert Te \Vert \leqslant (1+\varepsilon) \Vert e \Vert$  $\forall e\in E$
\end{enumerate}
\end{definition}
Almost Isometric ideals were first introduced in \cite{ALN}. It is easy to see that the notion of  $ai$-ideal is a direct generalisation of Principle of local reflexivity. For more details, see \cite{JL}, \cite{OP}.

\begin{proposition} \label{ai}
\cite{ALN}  A subspace $Y$ of $X$ is $ai$-ideal if and only if there exists a Hahn Banach extension operator $f : Y^* \rightarrow  X^*$ such that for every $\varepsilon > 0$ and every finite dimensional subspace $E\subset X,$ finite dimensional subspace $F\subset Y^*$ , there exists $T: E \rightarrow Y$ such that followings hold :
\begin{enumerate}
\item $Te = e $  $\forall e \in E\bigcap Y$
\item $\frac{1}{1+\varepsilon} \Vert e \Vert \leqslant \Vert Te \Vert \leqslant (1+\varepsilon) \Vert e \Vert$  $\forall e\in E$
\item $fy^*(e) = y^*(Te)$  $ \forall e\in E$ $\forall y^* \in F$  
\end{enumerate}  
\end{proposition}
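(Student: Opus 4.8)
The forward (``if'') direction is immediate: given such an $f$, for any $\varepsilon > 0$ and finite dimensional $E \subseteq X$ one applies the hypothesis with $F = \{0\}$ to obtain $T : E \to Y$ satisfying (1) and (2), which is exactly the defining property of an $ai$-ideal. So all the content lies in the reverse direction, where from the bare $ai$-ideal hypothesis one must manufacture a \emph{single} Hahn--Banach extension operator $f$ that is simultaneously compatible, via (3), with local almost-isometries for every admissible $E$ and $F$.

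To build $f$ I would pass to a limit of the local data. Let $\mathcal{D}$ be the set of pairs $(E,\varepsilon)$ with $E \subseteq X$ finite dimensional and $\varepsilon > 0$, directed by $(E_1,\varepsilon_1) \preceq (E_2,\varepsilon_2)$ iff $E_1 \subseteq E_2$ and $\varepsilon_1 \geq \varepsilon_2$, and fix an ultrafilter $\mathcal{U}$ on $\mathcal{D}$ containing every order tail. For each $(E,\varepsilon)$ let $T_{E,\varepsilon} : E \to Y$ be the operator from the $ai$-ideal definition. For $x \in X$ and $y^* \in Y^*$, the scalars $y^*(T_{E,\varepsilon}x)$ are defined whenever $x \in E$ (which holds on a tail, hence on a set in $\mathcal{U}$) and are bounded by $(1+\varepsilon)\|y^*\|\,\|x\|$, so I set $fy^*(x) = \lim_{\mathcal{U}} y^*(T_{E,\varepsilon}x)$. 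Linearity in $x$ and in $y^*$ is inherited from the $T_{E,\varepsilon}$ and preserved by the ultralimit; since $\varepsilon \to 0$ along $\mathcal{U}$ one gets $\|fy^*\| \leq \|y^*\|$; and for $y \in Y$ property (1) gives $T_{E,\varepsilon}y = y$ once $y \in E$, whence $fy^*(y) = y^*(y)$. Thus $f$ is a Hahn--Banach extension operator.

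The real work is (3): for prescribed $\varepsilon, E, F$ I must produce one $T$ realizing the values of the \emph{already fixed} $f$ exactly on $E \times F$. The plan is to start from a good representative and then perturb. Choose bases $e_1,\ldots,e_n$ of $E$ (with $e_1,\ldots,e_k$ spanning $E \cap Y$) and $y_1^*,\ldots,y_m^*$ of $F$. Since $fy_j^*(e_i) = \lim_{\mathcal{U}} y_j^*(T_{E',\varepsilon'}e_i)$, I may select from a set in $\mathcal{U}$ a pair $(E',\varepsilon')$ with $E \subseteq E'$, with $\varepsilon'$ as small as I like, and with $|y_j^*(T_{E',\varepsilon'}e_i) - fy_j^*(e_i)|$ below any preassigned tolerance $\eta$ for all $i,j$. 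Put $T_0 = T_{E',\varepsilon'}|_E$: it satisfies (1), satisfies (2) with $\varepsilon'$, and satisfies (3) up to $\eta$. I then correct it by $Te_i = T_0e_i + z_i$ with $z_i \in Y$ chosen so that $y_j^*(z_i) = fy_j^*(e_i) - y_j^*(T_0e_i)$ for all $j$. As $y_1^*,\ldots,y_m^*$ are linearly independent in $Y^*$, the moment map $z \mapsto (y_j^*(z))_{j=1}^m$ sends $Y$ onto $\R^m$ and has a bounded right inverse of some norm $C_F$, so the $z_i$ can be taken with $\|z_i\| \leq C_F\,\eta$. For $i \leq k$ the target vanishes (there $fy_j^*(e_i) = y_j^*(e_i) = y_j^*(T_0e_i)$), so I take $z_i = 0$ and keep (1).

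Finally I would verify that, for $\eta$ small enough relative to $\varepsilon - \varepsilon'$ and to the fixed geometry of $E$, the perturbation $S : E \to Y$, $Se_i = z_i$, is small in operator norm; then $T = T_0 + S$ still obeys the two-sided estimate (2) (compactness of $S_E$ turns a small operator-norm perturbation into uniform multiplicative control of $\|Te\|$), while by construction $y_j^*(Te_i) = fy_j^*(e_i)$ exactly, extending bilinearly to all of $E \times F$ to give (3). The main obstacle is precisely this last coordination: the ultralimit defining $f$ is only approximately attained by any single $T_{E',\varepsilon'}$, so one must force exact equality in (3) without damaging either the fixing property (1) or the almost-isometry (2); it is the finite dimensionality of $E$ and $F$ together with the bounded solvability of the moment conditions that make the correction affordable.
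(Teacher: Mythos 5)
This proposition is stated in the paper with the citation \cite{ALN} and no proof is given --- it is imported wholesale from Abrahamsen--Lima--Nygaard --- so there is no internal argument to compare yours against; what can be said is that your proof is essentially the standard one from that literature. Judged on its own merits it is correct. The forward direction via $F=\{0\}$ is fine; the ultrafilter construction of $f$ works as you describe (the set $\{(E,\varepsilon): x\in E\}$ contains the tail above $(\mathrm{span}\{x\},1)$, the sets $\{\varepsilon\le\delta\}$ contain tails, so boundedness, linearity in both variables, $\|fy^*\|\le\|y^*\|$ and $(fy^*)|_Y=y^*$ all pass to the ultralimit); and the exactness step is the right idea: pick $(E',\varepsilon')$ from a set in $\mathcal{U}$ realizing $f$ within $\eta$ on the finitely many pairs $(e_i,y_j^*)$, then correct by vectors $z_i\in Y$ solving the moment conditions $y_j^*(z_i)=fy_j^*(e_i)-y_j^*(T_0e_i)$. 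Two small points deserve tightening. First, the ``bounded right inverse'' of the moment map can simply be taken linear: choose $v_1,\ldots,v_m\in Y$ with $y_j^*(v_l)=\delta_{jl}$ (possible since the $y_j^*$ are linearly independent on $Y$) and set $z_i=\sum_j\bigl(fy_j^*(e_i)-y_j^*(T_0e_i)\bigr)v_l$; this makes $C_F$ explicit and avoids any appeal to the open mapping theorem. Second, your invocation of ``compactness of $S_E$'' is unnecessary: the two-sided estimate (2) for $T=T_0+S$ follows from the triangle inequality alone once $\|S\|\le(\varepsilon-\varepsilon')/\bigl((1+\varepsilon)(1+\varepsilon')\bigr)$, and $\|S\|$ is controlled by $\eta$ times the norms of the coordinate functionals of the chosen basis of $E$, all of which are fixed before $\eta$ is chosen. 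With $z_i=0$ for the basis vectors of $E\cap Y$ (where the target vanishes because $(fy^*)|_Y=y^*$ and $T_0$ fixes $E\cap Y$), property (1) survives, and equality in (3) on basis pairs extends bilinearly to $E\times F$. This compactness-plus-finite-rank-perturbation scheme is the same mechanism that underlies the proof in \cite{ALN} and the refinements of the principle of local reflexivity in \cite{OP}, so your route is not a new one, but it is complete and correct.
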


We show  that if a Banach space $X$ has $BHP,$ then any $M$-ideal of $X$ also has $BHP.$ We further show that if $Y$ is an $M$-ideal of $X,$ then $X^*$ has $w^{*}$-$BHP$ whenever $Y^*$ has $w^{*}$-$BHP.$
 We use these results to prove that for a compact Hausdorff space $K,$ which has an isolated point, $X$ has $BHP$ whenever $C(K,X)$ has $BHP$ and $C(K,X)^*$ has $w^{*}$-$BHP$ whenever $X^*$ has $w^{*}$-$BHP.$
We also prove that, if $Y$ is a strict ideal of $X,$ then $X^*$ has $w^{*}$-$BHP$ whenever $Y^*$ has $w^{*}$-$BHP.$
Lastly, we show that if $Y$ is an almost isometric ideal of $X$ then $Y$ has $BHP$ implies $X$ also has $BHP.$
We obtain similar results for $BDP$ and $BSCSP.$  
The spaces that we will be considering have been well studied in literature. A large class of function spaces like the Bloch spaces, Lorentz and Orlicz spaces, spaces of vector valued functions and spaces of compact operators are examples of the spaces we will be considering, for details, see \cite{HWW}. In some of our proofs, we use similar techniques as in \cite{ALN}and \cite{HL}.

\section{SMALL DIAMETER PROPERTIES IN IDEALS OF BANACH SPACES. }
We need the following result for our discussion

The following Proposition is an easy consequence of Proposition 2.3 \cite{W}. 

\begin{proposition} \label{lem Mideal}
	 Let $Y$ be an $M$-ideal in $X$. Then, for all $x\in B_X$ and for all $\delta >0,$ there exists a net $(y_\alpha)_\alpha$ in $Y$ such that $(y_\alpha)$ converges to $x$ in $\sigma (X,Y^*)$ topology. Moreover, for $y_0{^*}\in B_{Y^*}$ there exists $z\in B_Y$ such that 
	 \begin{enumerate}
	 \item $\Vert y+x-z\Vert < (1+\delta)$ \quad $\forall y\in B_Y$ and 
	 \item $\vert z(y_0{^*})-x(y_0{^*})\vert <\delta$
	\end{enumerate}
\end{proposition}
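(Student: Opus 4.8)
The plan is to read the whole statement through the $M$-ideal decomposition $X^{\ast} = Y^{\ast} \oplus_1 Y^{\perp}$ and the embedding of $X$ into $Y^{\ast\ast}$ it induces. Since $Y$ is an $M$-ideal, $Y^{\ast}$ is identified with $P(X^{\ast})$ for the $L$-projection $P$ with $\ker P = Y^{\perp}$, and each $y^{\ast} \in Y^{\ast}$ has a unique norm-preserving extension $\widehat{y^{\ast}} \in X^{\ast}$. Using these extensions I define $\Phi : X \to Y^{\ast\ast}$ by $\Phi(x)(y^{\ast}) = \widehat{y^{\ast}}(x)$; it is norm-nonincreasing and restricts to the canonical embedding on $Y$. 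The point of this reformulation is that, because $\widehat{y^{\ast}}(w) = y^{\ast}(w)$ for $w \in Y$, a net from $Y$ converges to $x$ in $\sigma(X, Y^{\ast})$ precisely when its $\Phi$-image converges to $\Phi(x)$ in $\sigma(Y^{\ast\ast}, Y^{\ast})$.

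For the net I would simply invoke Goldstine's theorem. Fix $x \in B_X$; then $\Phi(x) \in B_{Y^{\ast\ast}}$, and since the canonical image of $B_Y$ is $w^{\ast}$-dense in $B_{Y^{\ast\ast}}$, there is a net $(y_{\alpha})$ in $B_Y$ with $\Phi(y_{\alpha}) \to \Phi(x)$ in $\sigma(Y^{\ast\ast}, Y^{\ast})$; by the identification above this is exactly $y_{\alpha} \to x$ in $\sigma(X, Y^{\ast})$. Condition (ii) then comes essentially for free: for the fixed $y_0^{\ast} \in B_{Y^{\ast}}$ one has $y_0^{\ast}(y_{\alpha}) = \widehat{y_0^{\ast}}(y_{\alpha}) \to \widehat{y_0^{\ast}}(x) = x(y_0^{\ast})$, so every sufficiently advanced $y_{\alpha}$ satisfies $|y_{\alpha}(y_0^{\ast}) - x(y_0^{\ast})| < \delta$.

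The genuinely $M$-ideal-theoretic content is condition (i), and this is where I would call on Proposition 2.3 of \cite{W}. Its role is the restricted ball-intersection (``$n$-ball'') property of $M$-ideals: for the prescribed $y \in B_Y$, $x \in B_X$ and $\delta > 0$ it returns an element $z \in B_Y$ with $\|y + x - z\| < 1 + \delta$, the $L$-structure of $X^{\ast}$ being exactly what makes such an intersection of balls nonempty. The plan is to apply this approximation in tandem with the weak$^{\ast}$ requirement of (ii) --- carrying $y_0^{\ast}$ along as part of the prescribed data --- so that the single $z \in B_Y$ it produces is at once norm-admissible for (i) and $w^{\ast}$-accurate at $y_0^{\ast}$ for (ii).

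The step I expect to be the main obstacle is precisely this simultaneity. Goldstine alone yields (ii) but is silent about the norm inequality, whereas the bare ball property yields (i) but not proximity at $y_0^{\ast}$; one cannot in general take $z$ to be a member of the Goldstine net. I would therefore reconcile the two by passing to a suitable convex combination (or a bipolar/Hahn--Banach separation argument on $B_Y$) that records both the finitely many functionals detecting the weak$^{\ast}$ proximity and the norm constraint, and then verify that Werner's Proposition 2.3 can be invoked on that enlarged data to deliver one admissible $z$.
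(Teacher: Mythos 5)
Your opening moves (the extension operator, the map $\Phi$, Goldstine for the net, and condition (ii) along the net) are all correct, but the decisive step --- producing a \emph{single} $z\in B_Y$ satisfying (i) and (ii) simultaneously --- is exactly the step you leave open, and the patches you sketch would not close it. Averaging a Goldstine-type element $z_2$ (good for (ii)) with a ball-property element $z_1$ (good for (i)) fails: for $z=\lambda z_1+(1-\lambda)z_2$ the only a priori bound on $\sup_{y\in B_Y}\Vert y+x-z_2\Vert$ is $3$, so $\sup_{y\in B_Y}\Vert y+x-z\Vert\leqslant \lambda(1+\delta)+3(1-\lambda)$, which exceeds $1+\delta$ unless $\lambda$ is essentially $1$ --- at which point the control at $y_0^*$ is lost; the bipolar/separation alternative is not formulated concretely enough to assess. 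So, as written, the proposal has a genuine gap precisely at what you yourself flag as the ``main obstacle.''

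What fills the gap is the precise content of the result the paper cites --- and note that the paper offers no argument beyond that citation. Proposition 2.3 of \cite{W} is already stated in the simultaneous form: for an $M$-ideal $Y$ in $X$, $x\in B_X$, $\varepsilon>0$ and finitely many functionals $y_1^*,\ldots,y_n^*$ in the range of the $L$-projection (equivalently, the norm-preserving extensions of elements of $Y^*$), it produces one $z\in B_Y$ with $\Vert y+x-z\Vert\leqslant 1+\varepsilon$ for all $y\in B_Y$ \emph{and} $\vert y_i^*(x-z)\vert\leqslant\varepsilon$ for each $i$. Taking $n=1$ with the functional $\widehat{y_0^*}$ yields (i) and (ii) at once, and letting the index run over pairs (finite subset of $Y^*$, $\delta>0$) yields the net of the first assertion directly --- so even your (correct) Goldstine detour is unnecessary. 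In short, your strategy is the intended one, but you quote Werner's proposition only as a bare ball-intersection property; in its actual finitely-many-functionals form the simultaneity you try to re-derive by convex combinations is built in, which is why the paper can call the whole proposition an easy consequence of it.
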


\begin{proposition}\label{prop 1}
	Let $Y\subset X $ be an $M$-ideal. Then  $X$ has  $BSCSP$  implies $Y$ has $BSCSP$  .
\end{proposition}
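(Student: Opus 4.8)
My plan is to start from a small combination of slices of $B_X$, restrict the defining functionals to $Y$, and use Proposition \ref{lem Mideal} to absorb the discrepancy created by the directions transverse to $Y$. Fix $\e>0$. Since $X$ has $BSCSP$, I can choose $f_1,\dots,f_n\in S_{X^*}$, scalars $\la_i\ge 0$ with $\sum_i\la_i=1$, and $\alpha_i>0$ such that $C=\sum_i\la_i S(B_X,f_i,\alpha_i)$ has $\mathrm{diam}(C)<\e$. Put $g_i=f_i|_Y\in Y^*$ and $m_i=\|g_i\|$; after discarding a degenerate case discussed below I may assume $m_i>0$, so $\hat g_i=g_i/m_i\in S_{Y^*}$. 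Fixing $0<\beta_i\le \alpha_i/4$, the candidate object in $Y$ is the combination of slices $D=\sum_i\la_i S(B_Y,\hat g_i,\beta_i)$, which is a legitimate combination of slices of $B_Y$; the task is to bound its diameter.

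The key device is a single translation vector per index. For each $i$ I first pick $u_i\in B_X$ with $f_i(u_i)>1-\alpha_i/2$, and then apply Proposition \ref{lem Mideal} to $x=u_i$ and a small $\delta>0$ to get $w_i\in B_Y$ with $\|y+u_i-w_i\|<1+\delta$ for \emph{every} $y\in B_Y$. I set $v_i=u_i-w_i$; note that $v_i$ depends only on $(f_i,\alpha_i,\delta)$ and not on any slice point. Now, given an arbitrary $z_i\in S(B_Y,\hat g_i,\beta_i)$, I form $x_i=z_i+v_i$. Property (1) of Proposition \ref{lem Mideal}, applied with $y=z_i\in B_Y$, gives $\|x_i\|<1+\delta$; and since $z_i,w_i\in Y$ one has $f_i(z_i)=g_i(z_i)>m_i(1-\beta_i)$ and $f_i(w_i)=g_i(w_i)\le m_i$, so that
$$f_i(x_i)=f_i(z_i)+f_i(u_i)-f_i(w_i)>1-\frac{\alpha_i}{2}-m_i\beta_i\ge 1-\frac{3}{4}\alpha_i.$$
Hence, for $\delta$ small enough, $x_i/(1+\delta)\in S(B_X,f_i,\alpha_i)$.

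To finish I would compare two points $\sum_i\la_i z_i$ and $\sum_i\la_i z_i'$ of $D$, building $x_i,x_i'$ from them with the \emph{same} translations $v_i$. By the previous step $\frac{1}{1+\delta}\sum_i\la_i x_i$ and $\frac{1}{1+\delta}\sum_i\la_i x_i'$ both lie in $C$, so the norm of their difference is at most $\mathrm{diam}(C)<\e$. Because the translations cancel, $x_i-x_i'=z_i-z_i'$, and therefore $\|\sum_i\la_i(z_i-z_i')\|=\|\sum_i\la_i(x_i-x_i')\|<(1+\delta)\e$. Thus $\mathrm{diam}(D)\le(1+\delta)\e$, and letting $\e,\delta\to 0$ produces combinations of slices of $B_Y$ of arbitrarily small diameter, i.e. $Y$ has $BSCSP$.

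The step I expect to be the crux is exactly the one that makes the naive argument fail: since $\sup_{B_Y}g_i=m_i<1=\sup_{B_X}f_i$ (the gap being the norm of the $Y^\perp$-part of $f_i$), a point of the $Y$-slice can never sit inside $S(B_X,f_i,\alpha_i)$, so no inclusion of slices is available. Proposition \ref{lem Mideal} is what bridges this gap: the common vector $v_i=u_i-w_i$ pushes each $z_i$ across into the $X$-slice yet is independent of $z_i$, so it vanishes under subtraction. The only loose end is the degenerate case $g_i=0$ (that is, $f_i\in Y^\perp$), where the induced functional on $Y$ is zero and does not define a proper slice; I would remove it by a small perturbation of $f_i$ inside $S_{X^*}$ so that $f_i|_Y\neq 0$, which I expect to be routine but should be recorded.
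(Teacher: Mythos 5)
Your proof is correct, and it rests on the same two pillars as the paper's own argument: passing from $f_i$ to its normalized restriction $\hat g_i=f_i|_Y/\|f_i|_Y\|$ (which is exactly the paper's $y_i^*=Px_i^*/\|Px_i^*\|$, since the $L$-projection $P$ sends $x_i^*$ to the norm-preserving extension of $x_i^*|_Y$), and using Proposition \ref{lem Mideal} to push points of the $Y$-slices into the $X$-slices by a translation vector that is independent of the point and hence cancels in differences. But your execution is genuinely leaner. The paper argues by contradiction and needs \emph{both} conclusions of Proposition \ref{lem Mideal}: it picks $x_i$ nearly norming the $Y^\perp$-component $x_i^*-Px_i^*$ and then uses condition (2), $|Px_i^*(x_i-z_i)|<\alpha$, together with the $\ell_1$-identity $\|x_i^*\|=\|Px_i^*\|+\|x_i^*-Px_i^*\|$, to reassemble the estimate $x_i^*(x_i^k)>1-3\alpha$. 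You instead norm $f_i$ itself by $u_i$ and let the telescoping $f_i(z_i)-f_i(w_i)>m_i(1-\beta_i)-m_i=-m_i\beta_i$ absorb the restricted norm $m_i$; this uses only conclusion (1), never invokes the $L$-decomposition in an estimate, and is uniformly robust as $m_i\to 0$, so formally your argument works for any subspace enjoying just the approximation property "$\forall x\in B_X\ \forall\delta>0\ \exists z\in B_Y$ with $\|y+x-z\|<1+\delta$ for all $y\in B_Y$" --- slightly more general than stated. Your direct bound $\mathrm{diam}(D)\leqslant(1+\delta)\varepsilon$ also avoids the paper's bookkeeping with the widths $\beta_i=(\alpha-\alpha\|Px_i^*\|+\alpha^2)/\|Px_i^*\|$. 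On your one flagged loose end, the degenerate case $f_i|_Y=0$: the perturbation fix you sketch does work, but it is unnecessary --- do as the paper does and take an arbitrary $\hat g_i\in S_{Y^*}$ there; then $f_i(z_i)=f_i(w_i)=0$ and your key estimate improves to $f_i(x_i)=f_i(u_i)>1-\alpha_i/2$, so the argument closes with no perturbation at all.
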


\begin{proof}
	Suppose $Y$ does not have $BSCSP.$  Then there exists $\varepsilon >0,$ such that every convex combination of slices of $B_Y$ has diameter greater than $ \varepsilon.$ 
	Since $X$ has $BSCSP,$  there exists a convex combination of slices $\sum_{i=1}^{n} \lambda_i S(B_X,x_i^* , \alpha_i)$ of $B_X$ with diameter less than $ \frac{\varepsilon}{1+\varepsilon}.$
	Choose $\alpha$ such that 
	$0<\alpha<\min\{\varepsilon,\frac{\alpha_1}{3},\frac{\alpha_2}{3},\ldots,\frac{\alpha_n}{3}\}.$
	Since $Y$ is an $M$-ideal, there exists an $L$-projection $P:X^*\rightarrow X^*$ with $ker(P)=Y^\perp.$ 
	If  $Px_i^* = 0$ we choose  $\hat{y}_i^*\in S_{Y^*}$ arbitrarily and fix it. 
	Now define 
	
	$\vspace{.5 cm}$
	
	$y_i^*=    \left\{ \begin{array}{rcl}
		\frac{Px_i^*}{\Vert Px_i^* \Vert } &\mbox{for}
		& Px_i^* \neq 0 \\
		\hat{y}_i^* &\mbox{for}
		& Px_i^* = 0
	\end{array} \right.$
	and 
	$\beta_i=     \left\{ \begin{array}{rcl}
		\frac{ \alpha - \alpha \Vert Px_i^* \Vert + \alpha^2}{\Vert Px_i^* \Vert }  &\mbox{for}
		& Px_i^* \neq 0 \\
		\alpha &\mbox{for}
		& Px_i^* = 0
	\end{array} \right.$ 
	
	$\vspace{.5 cm}$
	
	Therefore , $\sum_{i=1}^{n} \lambda_i S(B_Y,y_i^* , \beta_i)$ is a convex combination of slices of $B_Y,$ by assumption it has diameter greater than $\varepsilon.$  Hence, there exist $y_1^1,y_2^1,\ldots,y_n^1\in B_Y$ \ and  \ $y_1^2,y_2^2,\ldots,y_n^2 \in B_Y$ such that 
	$$\Vert \sum_{i=1}^{n} \lambda_i (y_i^1 - y_i^2) \Vert > \varepsilon$$
	$$ and\quad  y_i^* (y_i^k) > 1 - \beta_i \quad \forall k=1,2 \quad \forall i= 1,2,...,n$$
	
	Now for $Px_i^* \neq 0$ 
	$$ \frac{Px_i^*}{\Vert Px_i^* \Vert } (y_i^k) > 1 - \beta_i \quad \forall k=1,2  $$ 
	$$\Rightarrow Px_i^* (y_i^k) > (\Vert Px_i^* \Vert - \alpha)(1+\alpha)\quad \forall k=1,2 $$
	Also there exists $x_1,x_2,\ldots,x_n \in B_X$ such that
	$$(x_i^* - Px_i^*)(x_i) > (\Vert x_i^* - Px_i^* \Vert - \alpha )(1+\alpha)\quad \forall i= 1,2,...,n$$
	Since $Y$ is an $M$-ideal in $X,$ by Proposition  $\ref{lem Mideal}$ for every $i=1,2,...,n$, there exist $z_i \in B_Y$ such that $$\Vert y_i^k + x_i - z_i \Vert < 1+\alpha \quad \forall k=1,2 $$ $$ and \quad  \vert Px_i^* (x_i - z_i)\vert < \alpha $$
	Put $$x_i^k = \frac{y_i^k + x_i - z_i}{1+\alpha} \quad \forall  k=1,2 \quad \forall  i= 1,2,...,n$$
	Thus $x_i^k \in S(B_X , x_i^*, \alpha_i)$ $\forall k=1,2$ . \\
	Indeed , $$x_i^* (x_i^k) = \frac{x_i^*(y_i^k + x_i - z_i)}{1+\alpha} \hspace{5 cm}$$  
	$$= \frac{Px_i^*(y_i^k) + (x_i^* - Px_i^*)(x_i) + Px_i^* (x_i - z_i)}{1+\alpha}$$ 
	$$\hspace{1.5 cm}>    \left\{ \begin{array}{rcl}
		\Vert Px_i^*\Vert - \alpha + \Vert x_i^* - Px_i^* \Vert - \alpha - \alpha 
		&\mbox{for} & Px_i^* \neq 0 \\
		\|x_i^*\| -\alpha 
		&\mbox{for} & Px_i^* = 0
	\end{array} \right.$$
	$$\geqslant    \left\{ \begin{array}{rcl}
		\Vert x_i^* \Vert - 3\alpha
		&\mbox{for} & Px_i^* \neq 0 \\
		\|x_i^*\| -\alpha 
		&\mbox{for} & Px_i^* = 0
	\end{array} \right. \hspace{1.3 cm}$$
	$$=    \left\{ \begin{array}{rcl}
		1 - 3\alpha
		&\mbox{for} & Px_i^* \neq 0 \\
		1 -\alpha 
		&\mbox{for} & Px_i^* = 0
	\end{array} \right.$$ 
	$$> 1-\alpha_i \hspace{3 cm}$$ 
	Now , $\Vert \sum_{i=1}^{n} \lambda_i  (x_i^1 - x_i^2) \Vert  =  \frac{\sum_{i=1}^{n} \lambda_i  (y_i^1 - y_i^2)}{1+\alpha}   >\frac{\varepsilon}{1+\alpha} >\frac{\varepsilon}{1+\varepsilon} $ \ ,
	\  a \  contradiction.\\
	Thus $Y$ has $BSCSP$ .
\end{proof}
However for $BDP,$ we have the following:
\begin{proposition} \label{ne}
	If $X$ has a proper $M$ ideal then $X$ fails to have $BDP.$ 
\end{proposition}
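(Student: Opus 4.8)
The plan is to argue by contradiction, showing that a proper $M$-ideal forces every slice of $B_X$ to be large, so that $B_X$ cannot have slices of arbitrarily small diameter. Suppose then that $X$ has $BDP$ and fix a small $\varepsilon>0$. Choose a slice $S=S(B_X,f,\alpha)$ with $\|f\|=1$ and $\operatorname{diam}(S)<\varepsilon$. Let $P:X^{*}\to X^{*}$ be the $L$-projection with $\ker P=Y^{\perp}$ coming from the $M$-ideal structure, and write $f=f_{1}+f_{2}$ with $f_{1}=Pf$ (viewed in $Y^{*}\cong P(X^{*})$) and $f_{2}=f-Pf\in Y^{\perp}$; since $P$ is an $L$-projection, $\|f_{1}\|+\|f_{2}\|=1$. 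Fix $x_{0}\in B_X$ with $f(x_{0})>1-\varepsilon$, so that $x_{0}\in S$.

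The core idea is to use the abundance of room an $M$-ideal provides in the directions of $Y$ to manufacture two points of $S$ that are almost two apart. First I would choose a norm-one vector $u\in Y$ lying in the kernel of $f_{1}$; such a $u$ exists precisely because $Y$ is a proper (and genuinely non-degenerate) $M$-ideal, so that $f_{1}$, as a functional on $Y$, has a nontrivial kernel. Applying Proposition \ref{lem Mideal} to $x_{0}$ with a parameter $\delta$ of the same order as $\varepsilon$ yields $z\in B_Y$ with $\|y+x_{0}-z\|<1+\delta$ for every $y\in B_Y$. In particular the two points
\[
p_{\pm}=\frac{x_{0}\pm u-z}{1+\delta}
\]
lie in $B_X$, and $\|p_{+}-p_{-}\|=\frac{2}{1+\delta}$, which is close to $2$. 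Because $u\in\ker f_{1}$ and $z\in Y$ while $f_{2}\in Y^{\perp}$, one computes $f(p_{\pm})=\dfrac{f(x_{0})-f_{1}(z)}{1+\delta}$, the same value for both signs.

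It then remains to verify that $p_{\pm}\in S$, i.e. that $f(p_{\pm})>1-\alpha$; granting this, $\operatorname{diam}(S)\ge\|p_{+}-p_{-}\|$ is close to $2$, contradicting $\operatorname{diam}(S)<\varepsilon$ and forcing $X$ to fail $BDP$. This last verification is the main obstacle. Testing the defining inequality $\|y+x_{0}-z\|<1+\delta$ against $f$ and taking the supremum over $y\in B_Y$ forces $f_{1}(z)$ to be comparable to $\|f_{1}\|$, so the construction keeps $p_{\pm}$ inside $S$ only when the $Y$-component $f_{1}=Pf$ of the slice functional is small. When instead $f$ is concentrated on $Y^{*}$, one must exploit the nontrivial $\sigma(X,Y^{*})$-approximation of $x_{0}$ by elements of $B_Y$ furnished by Proposition \ref{lem Mideal}, extracting two approximating vectors that are far apart in norm yet still satisfy $f>1-\alpha$ (here one uses $f(y)=f_{1}(y)$ for $y\in Y$ together with $\|f_{2}\|$ small).

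Reconciling these two regimes is exactly where the hypothesis that $Y$ is a proper $M$-ideal does the real work. The delicate intermediate range, where both $\|f_{1}\|$ and $\|f_{2}\|$ are moderate, is the crux: it is precisely the degenerate, finite-dimensional $M$-summand situation (whose unit ball can carry arbitrarily small slices near its vertices) that must be excluded, and the properness and non-summand character of the ideal is what supplies the genuine weak approximation of boundary points by $Y$ needed to inflate every slice. Making this uniform across all slice functionals, so that a single lower bound on slice diameters emerges, is the step I expect to require the most care.
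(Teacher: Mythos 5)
There is a genuine gap, and you flag it yourself: the verification that $p_{\pm}\in S$ is never completed, and the case analysis it would require neither covers all slices nor is settled in any single case. Concretely, applying $f$ to the inequality $\Vert y+x_{0}-z\Vert<1+\delta$ of Proposition \ref{lem Mideal} and taking the supremum over $y\in B_Y$ gives $f_{1}(z)\geqslant \Vert f_{1}\Vert+f(x_{0})-1-\delta$, whence $f(p_{\pm})=\bigl(f(x_{0})-f_{1}(z)\bigr)/(1+\delta)\leqslant 1-\Vert f_{1}\Vert/(1+\delta)$; this exceeds $1-\alpha$ only when $\Vert Pf\Vert<\alpha(1+\delta)$. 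But $f$ and $\alpha$ are adversarial data handed to you by the $BDP$ hypothesis, so you cannot normalize into this regime. In the opposite regime ($\Vert f_{2}\Vert$ small) you are implicitly invoking the assertion that slices of $B_Y$ have diameter close to $2$ when $Y$ is a proper $M$-ideal --- but that is exactly the nontrivial theorem (Chapter II, Theorem 4.4 of \cite{HWW}) on which the whole proposition rests, and it is nowhere proved; and the intermediate range, where $\Vert f_{1}\Vert$ and $\Vert f_{2}\Vert$ are both moderate, you explicitly leave open as ``the crux.'' Even your opening move, choosing a norm-one $u\in Y\cap\ker f_{1}$, needs $\dim Y\geqslant 2$, which for a proper $M$-ideal follows only from the (again nontrivial) fact that proper $M$-ideals contain $c_{0}$ (\cite{HWW}, II.4.7); ``properness'' in the sense relevant here, namely not being an $M$-summand, supplies no such vector by itself, and your sketch offers no mechanism through which the non-summand hypothesis actually enters the estimates.

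For contrast, the paper avoids all slice constructions with a soft transfer argument: it cites \cite{HWW} (Chapter II, Theorem 4.4) for the fact that a proper $M$-ideal $Y$ itself fails $BDP$, and then argues that if $X$ had $BDP$, then $X^{**}$ would have $w^*$-$BDP$ by Proposition \ref{A1}; since $X^{*}=Y^{*}\oplus_{1}Y^{\perp}$, Proposition 2.17 of \cite{BS} passes $w^*$-$BDP$ to $Y^{**}$, and Proposition \ref{A1} again brings it down to $BDP$ for $Y$ --- a contradiction. Your direct route would amount to reproving the \cite{HWW} theorem and the $Y$-to-$X$ transfer simultaneously, and the proposal completes neither half.
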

\begin{proof}
	Suppose $Y$ is a proper $M$-ideal of $X.$ Thus $X^*= Y^*\oplus_1 Y^{\perp}.$ If possible let, $X$ have $BDP.$ By Proposition $\ref{A1}$, $X^{**}$ has $w^*BDP.$ It now follows from Proposition 2.17, \cite{BS}, $Y^{**}$ has $w^*BDP.$  Applying Proposition $\ref{A1}$ again we can conclude that $Y$ has $BDP,$ a contradiction, since a proper $M$-ideal fails to have $BDP$ (see \cite{HWW}, Chapter II, Theorem 4.4).
\end{proof}
In view of Proposition $\ref{ne}$ and Proposition $\ref{prop 1},$ we have the following remark 
\begin{remark} \label{M ideal BDP}
	Putting $n=1$ in Proposition $\ref{prop 1},$ it follows that if $X$ is a Banach space with $BDP$ and $Y$ is an $M$-summand of $X,$ then $Y$ has  $BDP.$  
	This also follows from Proposition 2.7, \cite{BS}.
	
\end{remark}

\begin{proposition}\label{bhp mideal}
	Let $X$ be a Banach space and let $Y\subset X $ be an $M$-ideal , then  $X$ has  $BHP$  implies $Y$ has  $BHP.$
\end{proposition}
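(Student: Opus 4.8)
The plan is to argue by contradiction, closely following the scheme of Proposition \ref{prop 1} but transferring a relatively weakly open set rather than a combination of slices. Suppose $Y$ fails $BHP$, so there is an $\varepsilon>0$ with $\mathrm{diam}(V')>\varepsilon$ for every nonempty relatively weakly open $V'\subseteq B_Y$. Since $X$ has $BHP$, I pick a nonempty relatively weakly open $W\subseteq B_X$ with $\mathrm{diam}(W)<\frac{\varepsilon}{1+\varepsilon}$; shrinking $W$ to a basic neighbourhood around one of its points, I may assume $W=\{x\in B_X:|x_i^*(x)-a_i|<\delta,\ i=1,\dots,n\}$ with $x_i^*\in S_{X^*}$, where $a_i=x_i^*(x_0)$ for a fixed $x_0\in W$. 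The goal is to manufacture from $W$ a nonempty relatively weakly open subset of $B_Y$ of diameter at most $\varepsilon$, which is the desired contradiction.

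Using the $L$-projection $P$ (so $X^*=Y^*\oplus_1 Y^\perp$), I decompose $x_i^*=Px_i^*+q_i^*$ with $q_i^*\in Y^\perp$, and note that $q_i^*$ vanishes on $Y$, so $x_i^*(y)=Px_i^*(y)$ for every $y\in Y$. I then set $V=\{y\in B_Y:|Px_i^*(y)-Px_i^*(x_0)|<\eta,\ i=1,\dots,n\}$, a relatively weakly open subset of $B_Y$ determined by the functionals $Px_i^*\in Y^*$, where the parameters $\alpha,\eta$ are chosen small (in particular $\alpha<\varepsilon$ and $2\eta+\alpha<\delta$). The engine of the proof is Proposition \ref{lem Mideal} applied at the point $x_0$: it produces $z\in B_Y$ with $\|y+x_0-z\|<1+\alpha$ for every $y\in B_Y$ and, in the finite-functional form discussed below, with $|Px_i^*(z)-Px_i^*(x_0)|<\eta$ for all $i$. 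In particular $z\in V$, so $V$ is nonempty and is a genuine witness.

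Finally, for each $y\in V$ I lift it to $x=\frac{y+x_0-z}{1+\alpha}$. Condition (1) of Proposition \ref{lem Mideal} gives $\|x\|<1$, and a direct computation using $x_i^*(y)=Px_i^*(y)$, $x_i^*(z)=Px_i^*(z)$ and $x_i^*(x_0)=a_i$ yields $x_i^*(x)-a_i=\frac{[Px_i^*(y)-Px_i^*(z)]-\alpha a_i}{1+\alpha}$, whose modulus is at most $\frac{2\eta+\alpha}{1+\alpha}<\delta$; hence $x\in W$. Crucially, the same $z$ works for every $y\in V$ (this is exactly why condition (1) is stated uniformly over $y\in B_Y$), so if $y,y'\in V$ lift to $x,x'\in W$ then $x-x'=\frac{y-y'}{1+\alpha}$ and therefore $\|y-y'\|=(1+\alpha)\|x-x'\|\le(1+\alpha)\,\mathrm{diam}(W)<\varepsilon$. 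Thus $\mathrm{diam}(V)\le\varepsilon$, contradicting the failure of $BHP$ for $Y$. I expect the main obstacle to be precisely the step highlighted in the previous paragraph: Proposition \ref{lem Mideal} is stated for a single functional $y_0^*$, whereas a small-diameter weakly open set genuinely requires the finitely many functionals $Px_1^*,\dots,Px_n^*$ to be controlled simultaneously by one $z$. I therefore anticipate needing the finite-functional strengthening of Proposition \ref{lem Mideal}, which should follow from the same $M$-ideal ball property of \cite{W}, since condition (1) is already uniform over $B_Y$ and only condition (2) must be extended to a finite family.
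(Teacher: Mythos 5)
Your argument is correct in outline and does reach the conclusion, but at its one delicate point it diverges from the paper's proof, and the divergence costs you a lemma the paper never needs. You centre the pulled-back set at $x_0$, taking $V=\{y\in B_Y:|Px_i^*(y)-Px_i^*(x_0)|<\eta,\ i=1,\dots,n\}$, and must then produce a single $z\in B_Y$ satisfying Werner's uniform estimate $\|y+x_0-z\|<1+\alpha$ for all $y\in B_Y$ \emph{together with} $|Px_i^*(z)-Px_i^*(x_0)|<\eta$ for all $n$ functionals simultaneously; as you yourself flag, Proposition \ref{lem Mideal} controls only one functional, so your proof is complete only modulo a finite-functional strengthening. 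That strengthening is indeed true and standard (it is essentially Proposition 2.3 of \cite{W} in the form used in \cite{HL}), so the gap is fillable, but within the paper's stated toolkit your proof outsources a step. The paper's own proof of Proposition \ref{bhp mideal} avoids the issue entirely by recentering: it takes $y_0\in B_Y$ from condition (1) alone --- condition (2) is never invoked --- and sets $V=\{y\in B_Y:|Px_i^*(y-y_0)|<\beta,\ i=1,\dots,n\}$, which contains $y_0$ and is therefore automatically nonempty; since $y-y_0\in Y$ and $x_i^*-Px_i^*\in Y^\perp$, one gets $x_i^*(y-y_0)=Px_i^*(y-y_0)$, so for $x=\frac{y+x_0-y_0}{1+\beta}$ the estimate $|x_i^*(x-x_0)|=\frac{|Px_i^*(y-y_0)-\beta x_i^*(x_0)|}{1+\beta}<\frac{2\beta}{1+\beta}<\gamma$ holds with no weak proximity of $y_0$ to $x_0$ required at all. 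The remaining difference is cosmetic: the paper picks $y,\tilde y\in V$ with $\|y-\tilde y\|>\varepsilon$ and lifts them into the small set $U$, whereas you bound $\mathrm{diam}(V)\le(1+\alpha)\,\mathrm{diam}(W)<\varepsilon$; both rest on the fact that $y\mapsto\frac{y+x_0-z}{1+\alpha}$ is an affine similarity with a $z$ independent of $y$. In short: your route buys a $V$ that is weakly anchored to $x_0$, which you never actually exploit, at the price of a stronger lemma; the paper's recentering trick buys a self-contained proof running on Proposition \ref{lem Mideal}(1) alone, and is the adjustment you should make.
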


\begin{proof}
	 Suppose $Y$ fails  $BHP.$  Then there exists $\varepsilon >0$ such that every nonempty relatively weakly open subset of $B_Y$ has diameter greater than $ \varepsilon.$
	Since $X$ has $BHP,$ there exists a basic relatively weakly open subset $U=\{x\in B_X  : \vert x_i^* (x-x_0)\vert <\gamma , i= 1,2,...,n\} \quad (x_i^* \in S_{X^*} \forall i \quad and \quad x_0 \in B_X)$ with diameter less than  $\frac{\varepsilon}{1+\varepsilon}.$
	Choose $\beta >0 $ such that $\beta  < min \{ \frac{\gamma}{4}, \varepsilon\}.$ 
	Since $Y$ is an $M$-ideal of $X,$ there exists a $L$-projection $P:X^*\rightarrow X^*$ with $ker(P)=Y^\perp.$
	Also, by Proposition  $\ref{ Mideal}$ there exists $y_0 \in B_Y$ such that $\Vert y+x_0-y_0\Vert < 1+\beta \quad \forall y\in B_Y.$
	Consider, $V=\{y\in B_Y  : \vert Px_i^* (y-y_0)\vert <\beta , i= 1,2,...,n\}.$ 
	Then $V$  is a nonempty relatively weakly open subset of $B_Y,$ so $dia(V)>\varepsilon.$ Hence there exists $y,\tilde{y} \in V$ such that $\Vert y-\tilde{y} \Vert > \varepsilon.$
	Now  $\Vert y+x_0-y_0\Vert < 1+\beta $ and $\Vert \tilde{y} +x_0-y_0\Vert < 1+\beta. $
	Put $$ x= \frac{y+x_0-y_0}{1+\beta} \quad and \quad  \tilde{x} = \frac{\tilde{y} +x_0-y_0}{1+\beta}.$$
	So, for $i = 1,2,..,n,$
	$$\vert x_i^*(x-x_0)\vert \quad = \quad \frac{\vert x_i^*(y-\beta x_0 - y_0)\vert}{1+\beta} \quad \leqslant \frac{\vert Px_i^*(y - y_0)\vert + \beta \vert x_i^* (x_0) \vert}{1+\beta}$$  
	$$\quad\quad\quad\quad\quad\quad\quad\quad\quad\quad
	\quad\quad\quad\quad\quad < \frac{2\beta}{1+\beta} <2\beta <\gamma$$
	Thus , $x\in U$ and similarly $\tilde{x} \in U$\\
	Now, $\Vert x-\tilde{x}\Vert = \frac{\Vert y-\tilde{y}\Vert}{1+\beta} > \frac{\varepsilon}{1+\beta}>\frac{\varepsilon}{1+\varepsilon},$
	 a contradiction.
	Thus $Y$ has $BHP.$ 
\end{proof}

\begin{proposition}\label{mideal}
	If $Y\subset X$ is an $M$-ideal in $X$ , then $Y^*$ has $w^*BHP$ implies $X^*$ has $w^*BHP.$
\end{proposition}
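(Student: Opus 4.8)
The plan is to exploit the $M$-ideal decomposition $X^* = Y^* \oplus_1 Y^\perp$ to lift a small-diameter $w^*$-open set from $B_{Y^*}$ up to $B_{X^*}$. Since $Y$ is an $M$-ideal, every $y^* \in Y^*$ has a \emph{unique} norm-preserving extension to $X^*$, and this extension is exactly the $Y^*$-component under the $\ell_1$-decomposition; in particular $B_{X^*}$ is the convex hull (indeed the $\ell_1$-sum) of $B_{Y^*}$ and $B_{Y^\perp}$, with $\|y^* + z^*\| = \|y^*\| + \|z^*\|$ for $y^* \in Y^*$, $z^* \in Y^\perp$. Starting from the hypothesis that $Y^*$ has $w^*$-$BHP$, I would fix $\varepsilon > 0$ and take a basic relatively $w^*$-open set $W = \{y^* \in B_{Y^*} : |(y^*-y_0^*)(y_i)| < \gamma,\ i=1,\dots,n\}$ (with $y_i \in Y$, $y_0^* \in B_{Y^*}$) of diameter less than $\tfrac{\varepsilon}{1+\varepsilon}$, and then build a $w^*$-open set in $B_{X^*}$ out of the same functionals $y_i \in Y \subset X$.

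First I would set up the candidate set in $X^*$. Because the $y_i$ live in $Y$, they annihilate $Y^\perp$, so testing against them only ``sees'' the $Y^*$-component of an element of $X^*$. The natural choice is
$$
U = \{x^* \in B_{X^*} : |(x^* - y_0^*)(y_i)| < \beta,\ i=1,\dots,n\},
$$
where $\beta$ is chosen small relative to $\gamma$ and $\varepsilon$, and $y_0^*$ is viewed inside $B_{X^*}$ via its norm-preserving extension. This is a nonempty relatively $w^*$-open subset of $B_{X^*}$ (it contains $y_0^*$). The key claim will be that $\mathrm{diam}(U)$ is small. Given $x^*, \tilde x^* \in U$, decompose $x^* = u^* + z^*$ and $\tilde x^* = \tilde u^* + \tilde z^*$ with $u^*, \tilde u^* \in Y^*$ and $z^*, \tilde z^* \in Y^\perp$. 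Since $\|x^*\| = \|u^*\| + \|z^*\| \le 1$, the $Y^*$-parts $u^*, \tilde u^*$ lie in $B_{Y^*}$, and because $y_i \in Y$ kills the $Y^\perp$-part, the defining inequalities force $u^*, \tilde u^* \in W$ (after absorbing the constant $\beta < \gamma$). Hence $\|u^* - \tilde u^*\|$ is controlled by $\mathrm{diam}(W) < \tfrac{\varepsilon}{1+\varepsilon}$.

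The main obstacle is controlling the $Y^\perp$-components $z^*, \tilde z^*$, which the functionals $y_i$ cannot detect at all; a priori $\|z^* - \tilde z^*\|$ could be as large as $2$, ruining the diameter estimate. This is exactly the difficulty that the $M$-ideal structure must resolve, and I expect to handle it by mimicking the mechanism of Proposition~\ref{prop 1}: one enlarges the tested functionals so that near-maximality on the slices/open set forces the $Y^*$-norm up toward $1$, squeezing the $Y^\perp$-norm toward $0$ via the $\ell_1$-relation $\|u^*\| + \|z^*\| \le 1$. Concretely I would augment the definition of $U$ with a constraint of the form ``$u^*$ nearly norms some fixed vector'' (or equivalently intersect with a $w^*$-slice driven by an element of $B_Y$ on which $y_0^*$ is nearly $1$), forcing $\|u^*\|, \|\tilde u^*\| > 1 - \beta$ and hence $\|z^*\|, \|\tilde z^*\| < \beta$. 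Then
$$
\|x^* - \tilde x^*\| \le \|u^* - \tilde u^*\| + \|z^*\| + \|\tilde z^*\| < \frac{\varepsilon}{1+\varepsilon} + 2\beta,
$$
which is below $\varepsilon$ for $\beta$ small. Letting $\varepsilon \to 0$ shows $B_{X^*}$ admits $w^*$-open sets of arbitrarily small diameter, i.e.\ $X^*$ has $w^*$-$BHP$. The technical care lies entirely in choosing the auxiliary functional realizing the norming condition and verifying that the augmented set stays relatively $w^*$-open and nonempty; the rest is the routine triangle-inequality bookkeeping already rehearsed in the preceding propositions.
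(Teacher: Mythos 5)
Your proposal is correct and follows essentially the same route as the paper: decompose $X^* = Y^* \oplus_1 Y^\perp$, lift the basic $w^*$-open set using test vectors $y_i \in Y$ (which annihilate $Y^\perp$), and — the key step you correctly identified — augment with one extra vector $y_0 \in B_Y$ nearly normed by the center $y_0^*$, so that the $\ell_1$-relation $\|u^*\| + \|z^*\| \le 1$ squeezes the invisible $Y^\perp$-component below $\varepsilon$. The paper implements exactly this augmentation (its set $W$ tests against $y_0, y_1, \dots, y_n$ with $y_0^* \in S_{Y^*} \cap V$), so your sketch matches its proof in both structure and mechanism.
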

\begin{proof}
	Let $\varepsilon>0.$ Since $Y^*$ has $w^*BHP$, $B_{Y^*}$ has $w^*$ open subset of diameter less than $\varepsilon.$ Choose $m_0^*$ in $S_{Y^*}\bigcap V.$
	Then there exists $V_0 = \{ y^* \in B_{Y^*} : \vert y^*(y_i) - y_0^*(y_i)\vert <\alpha \forall i=1,2,...n\}\subset V$ for some $n\in \mathbb{N}$ and $y_1,y_2,\ldots, y_n\in B_Y.$
	Since $Y$ is an $M$-ideal, $X^*= Y^* \oplus_1 Y^\perp.$
	For $y_0^*\in S_{Y^*},$ there exists  $y_0\in B_Y$ such that $\vert y_0^*(y_0)\vert > 1-\varepsilon.$
	Choose $\gamma>0$ such that $\vert y_0^*(y_0)\vert > 1-\varepsilon+\gamma.$
	Let $U_0=\{y^* \in B_{Y^*} : \vert y_0^*(y_0) - y^*(y_0)\vert <\gamma \}.$
	Then, for $y^* \in U_0, \vert y^*(y_0)\vert > \vert y_0^*(y_0)\vert -\gamma > (1-\varepsilon + \gamma)-\gamma=1-\varepsilon.$
	Choose $0<\delta<\min\{\alpha , \gamma\}.$
	Let, $$W=\{x^*\in B_{X^*} : \vert x^*(y_i) - y_0^*(y_i)\vert<\delta , i=0,1,2,\ldots,n\}$$
	Clearly , $W$ is a relatively $w^*$ open subset of $B_{X^*}.$
	Then , $W\subset V_0 + \varepsilon B_{Y^\perp}$
	Indeed , let $x^*\in W.$ Then there exists $y^*\in Y^*$ and $y^\perp \in Y^\perp$ such that $x^*=y^*+y^\perp.$\\
	Then $$\vert x^*(y_i)-y_0^*(y_i)\vert <\delta \quad \forall i=0,1,2,\ldots, n. $$
	$$\Rightarrow \vert (y^*+y^\perp)(y_i)-y_0^*(y_i)\vert <\delta \quad \forall i=0,1,2,\ldots,n. $$
	$$\Rightarrow \vert y^*(y_i)-y_0^*(y_i)\vert <\delta \hspace{1.8 cm} \forall i=0,1,2,\ldots,n. $$
	Hence, $\vert y^*(y_i)-y_0^*(y_i)\vert <\delta<\alpha \quad \forall i=0,1,2,\ldots,n $ \ and \ $\vert y^*(y_0)-y_0^*(y_0)\vert <\delta<\gamma$\\
	So , $y^*\in V_0$ and $y^*\in U_0$ , which implies $y^*\in V_0$ and $\Vert y^* \Vert > 1-\varepsilon .$\\
	Since, $\Vert x^* \Vert = \Vert y^* \Vert + \Vert y^\perp \Vert, $ it follows that $\Vert y^\perp \Vert < \varepsilon$ \quad i.e.\quad $y^\perp \in \varepsilon B_{Y^\perp}.$
	Thus $x^*=y^*+y^\perp \in V_0 + \varepsilon B_{Y^\perp}.$
	Hence , $$diam(W)\leqslant diam(V_0) + diam ( \varepsilon B_{Y^\perp})\hspace{1.5 cm}$$
	$$\leqslant diam(V) + diam ( \varepsilon B_{Y^\perp})$$
	$$<\varepsilon + \varepsilon\hspace{3 cm}$$
	$$<2\varepsilon\hspace{3.2 cm}$$
	
\end{proof}

	\begin{remark}
		Similarly, for an  $M$- ideal $Y$ in   $X$,   $Y^*$ has $w^*BSCSP$ ( $w^*BDP$ ) implies $X^*$ has $w^*BSCSP$ ( $w^*BDP$), see \cite{BR},\cite{B2}.
	\end{remark}

We recall that for a compact Hausdorff space
$K$, $C(K,X)$ denotes the space of continuous $X$-valued functions
on $K$, equipped with the supremum norm. We recall from \cite{L} that dispersed compact Hausdorff spaces have isolated points.

\begin{proposition}\label{c(k)}
	Let $K$ be a compact, Hausdorff space with an isolated point. Then
	\blr
		\item If  $C(K,X)$  has $BSCSP$(respectively $BHP$, $BDP$)then $X$ has $BSCSP$ (respectively $BHP$, $BDP$).
		\item If $X^*$ has $w^*$-$BHP$ , then  $C(K,X)^*$  has $w^*$-$BHP$.
		\el
\end{proposition}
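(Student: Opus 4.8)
The plan is to reduce both parts to the $M$-ideal transfer results already proved, via the single structural observation that an isolated point of $K$ turns $X$ into an $M$-summand of $C(K,X)$.

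First I would fix an isolated point $k_0\in K$. Since $K$ is compact Hausdorff, $\{k_0\}$ is clopen, so $K'=K\setminus\{k_0\}$ is compact and each $f\in C(K,X)$ is completely described by the pair consisting of its value $f(k_0)\in X$ and its restriction $f|_{K'}\in C(K',X)$, with $\|f\|=\max\{\|f(k_0)\|,\|f|_{K'}\|\}$. This yields the isometric identification $C(K,X)=X\oplus_\infty C(K',X)$. The projection $P$ sending $f$ to the function taking the value $f(k_0)$ at $k_0$ and $0$ on $K'$ then satisfies $\|f\|=\max\{\|Pf\|,\|f-Pf\|\}$, i.e. it is an $M$-projection whose range is isometric to $X$. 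Hence $X$ is an $M$-summand, in particular an $M$-ideal, of $C(K,X)$; this is the only non-routine step, and everything else is a direct application of the earlier propositions with $C(K,X)$ in the role of the ambient space and $X$ in the role of the $M$-ideal.

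For part (i), the three transfer-down results apply directly to the pair $X\subset C(K,X)$: Proposition \ref{prop 1} passes $BSCSP$ from $C(K,X)$ to $X$, Proposition \ref{bhp mideal} does the same for $BHP$, and, since $X$ is in fact an $M$-summand, Remark \ref{M ideal BDP} handles $BDP$. (When $K$ is a singleton all three are trivial as $C(K,X)=X$; when $K$ has more than one point $X$ is a proper $M$-ideal, and for $BDP$ one could alternatively note via Proposition \ref{ne} that the hypothesis ``$C(K,X)$ has $BDP$'' is vacuous.) For part (ii), I would invoke Proposition \ref{mideal}, which states that for an $M$-ideal $Y$ of an ambient space, $w^*$-$BHP$ of $Y^*$ forces $w^*$-$BHP$ of the ambient dual. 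Taking the ambient space to be $C(K,X)$ and the $M$-ideal to be $X$, the assumption that $X^*$ has $w^*$-$BHP$ gives at once that $C(K,X)^*$ has $w^*$-$BHP$, which is the desired conclusion.

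The main obstacle is thus entirely front-loaded into justifying the $M$-summand decomposition: one must check that the clopenness of $\{k_0\}$ is precisely what decouples the value $f(k_0)$ from the remaining values and produces the $\oplus_\infty$ norm, and then be careful to match the roles of ``ambient space'' and ``$M$-ideal'' correctly in each citation. The delicate point to state explicitly is the direction in Proposition \ref{mideal}: it is the \emph{smaller} space $X$ whose dual property is assumed and the \emph{larger} space $C(K,X)$ whose dual property is concluded, so the hypotheses of part (ii) are aligned exactly as needed. Once the decomposition is in hand, the remainder of the argument is bookkeeping over the already-established $M$-ideal lemmas.
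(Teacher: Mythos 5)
Your proposal is correct and follows essentially the same route as the paper: the isolated point $k_0$ yields the $M$-projection $f\mapsto \chi_{\{k_0\}}f$, making $X$ an $M$-summand (hence $M$-ideal) of $C(K,X)$, after which part (i) follows from the transfer-down results (Propositions \ref{prop 1} and \ref{bhp mideal} and Remark \ref{M ideal BDP}) and part (ii) from Proposition \ref{mideal}, exactly as in the paper. Your explicit verification of the $\oplus_\infty$ decomposition and the correct orientation of Proposition \ref{mideal} (small space's dual hypothesis, big space's dual conclusion) are just the details the paper leaves implicit.
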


\begin{proof}
	Let $k_0\in K$ be a isolated point. Then the map $P$ such that  $F \longmapsto \chi_{k_0} F$ is an $M$-projection in $C(K,X)$ whose range is isometric to $X.$  
	Thus $X$ is isometric to an $M$-summand and hence an $M$-ideal in $C(K,X).$
	(i) follows from Proposition 2.15 \cite{BS}, Remark \ref{mideal bdp}  and Proposition~ \ref{bhp mideal}.
	(ii) follows from Proposition \ref{mideal}.
	\end{proof}

\begin{remark}
	Similar results as in (ii), also hold good for $w^*$-$BDP$ and $w^*$-$BSCSP$. See \cite {B2} and \cite {BR}.
\end{remark}

\begin{proposition}
	Let $Y$ be a strict ideal of  $X.$ If $Y^*$ has $w^*BHP,$ then $X^*$ has $w^*BHP$.
\end{proposition}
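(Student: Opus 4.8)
The plan is to mimic the $M$-ideal argument of Proposition \ref{mideal} as closely as possible, replacing the $\ell_1$-decomposition $X^*=Y^*\oplus_1 Y^\perp$ by whatever structure a strict ideal provides. Recall that for a strict ideal $Y\subset X$ with norm-one projection $P:X^*\to X^*$, $\ker P=Y^\perp$, we have $X^*=Y^*\oplus Y^\perp$ (writing $Y^*$ for $P(X^*)$), and crucially $B_{P(X^*)}$ is $w^*$-dense in $B_{X^*}$, i.e. it is a norming set for $X$. The identification $Y\subset X\subset Y^{**}$ will be the geometric fact I exploit.

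First I would fix $\e>0$ and use the hypothesis that $Y^*$ has $w^*$-$BHP$ to produce a basic $w^*$-open set $V_0=\{y^*\in B_{Y^*}:|y^*(y_i)-y_0^*(y_i)|<\alpha,\ i=1,\ldots,n\}$, with $y_1,\ldots,y_n\in B_Y$ and $y_0^*\in S_{Y^*}$, of diameter less than $\e$. The goal is to lift this to a $w^*$-open set $W$ in $B_{X^*}$ of controlled diameter. Since $Y\subset X$, each $y_i\in B_Y\subset B_X$, so the functionals $x^*\mapsto x^*(y_i)$ are $w^*$-continuous on $X^*$ and I can form $W=\{x^*\in B_{X^*}:|x^*(y_i)-y_0^*(y_i)|<\delta\}$ using the norm-preserving extension $y_0^*$ viewed in $X^*$, for suitable $\delta<\alpha$; nonemptiness of $W$ follows from the norming/$w^*$-density property. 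I would then try to show every $x^*\in W$ decomposes as $x^*=Px^*+(x^*-Px^*)$ with $Px^*$ forced into (a scalar multiple of) $V_0$ and the $Y^\perp$-component small.

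The hard part will be controlling the diameter, because a strict ideal gives only $X^*=Y^*\oplus Y^\perp$ as a \emph{topological} (not $\ell_1$) direct sum, so the clean estimate $\|x^*\|=\|Px^*\|+\|x^*-Px^*\|$ used in Proposition \ref{mideal} is unavailable. The natural substitute is to note that since $B_{P(X^*)}$ is norming and $Y\subset X\subset Y^{**}$, the restriction map identifies $Px^*$ with an element of $B_{Y^*}$ acting on the same $y_i$, so $Px^*$ lands in $V_0$ (after the usual rescaling argument with a point $y_0\in B_Y$ nearly norming $y_0^*$, exactly as in Proposition \ref{mideal}), giving $\mathrm{diam}$ of the $Y^*$-part at most $\mathrm{diam}(V_0)<\e$. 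To handle the $Y^\perp$-part I would again pick $y_0\in B_Y$ with $|y_0^*(y_0)|>1-\e$, add the constraint $|x^*(y_0)-y_0^*(y_0)|<\delta$ to $W$, and argue that this forces the norm of $Px^*$ close to $1$; since $\|x^*\|\le 1$ and $Px^*,\,x^*-Px^*$ interact through the $w^*$-density, the residual $x^*-Px^*$ must be small in the $w^*$-topology on the finite test set $\{y_1,\ldots,y_n,y_0\}$, which is all that the diameter computation on $W$ actually needs.

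Concretely, I expect the estimate to reduce to bounding $\|x_1^*-x_2^*\|$ for $x_1^*,x_2^*\in W$ by $\|Px_1^*-Px_2^*\|+\|(I-P)x_1^*-(I-P)x_2^*\|$, where the first term is $<\e$ because both projections lie in $V_0$ and $\mathrm{diam}(V_0)<\e$, and the second is $<C\e$ once the norming condition pins $\|Px_i^*\|$ near $1$ and hence, by $\|x_i^*\|\le 1$ together with the strict-ideal norm structure, forces $\|(I-P)x_i^*\|$ small. If the bare topological decomposition is too weak to bound the $Y^\perp$-component in norm, the fallback is to carry out the entire argument using only the $w^*$-density of $B_{P(X^*)}$: choose the defining functionals $y_i,y_0$ in $B_Y$, so that membership in $W$ is tested only against elements of $Y$, on which $x^*$ and $Px^*$ agree; then $\mathrm{diam}(W)$ is governed entirely by $\mathrm{diam}(V_0)$ up to the additive slack from $\delta$, yielding $\mathrm{diam}(W)<2\e$ and completing the proof that $X^*$ has $w^*$-$BHP$.
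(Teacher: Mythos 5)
Your first strand --- pinning $\|Px^*\|$ near $1$ and deducing that $\|(I-P)x^*\|$ is small --- cannot be repaired: for a strict ideal, $P$ is merely a norm-one projection, so $X^*=P(X^*)\oplus Y^\perp$ carries no quantitative norm relation, and $\|x^*\|\leq 1$ together with $\|Px^*\|>1-\varepsilon$ gives no bound on $\|x^*-Px^*\|$ better than the trivial $2$. (In the $M$-ideal case this step worked only because the sum was an $\ell_1$-sum; there is no ``strict-ideal norm structure'' to substitute for it.) Your fallback correctly isolates the two usable ingredients --- test vectors $y_1,\dots,y_n\in B_Y$, on which $x^*$ and $Px^*$ agree, and the $w^*$-density of $B_{Y^*}\cong B_{P(X^*)}$ in $B_{X^*}$ --- but the inference you draw from them (``membership in $W$ is tested only against elements of $Y$, hence $\mathrm{diam}(W)$ is governed by $\mathrm{diam}(V_0)$'') is a non sequitur: agreement on the test vectors places $Px^*$ in $V_0$, but the norm of $x^*-Px^*$ is computed against all of $B_X$, not against the finite test set, so componentwise control of the $P$-parts says nothing about $\mathrm{diam}(W)$.

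The missing idea, which is the whole content of the paper's proof, is to avoid the decomposition altogether: define $W=\{x^*\in B_{X^*}: |x^*(y_i)-y_0^*(y_i)|<\alpha,\ i=1,\dots,n\}$ with the \emph{same} data as $V$ and show $W\subseteq\overline{V}^{w^*}$. Given $x^*\in W$, strictness provides a net $(x_\lambda^*)$ in $B_{Y^*}$ with $x_\lambda^*\to x^*$ in the $w^*$ topology; since the defining conditions are finitely many evaluations at points of $Y$ and the inequalities are strict, eventually $x_\lambda^*\in V$, whence $x^*\in\overline{V}^{w^*}$. One then concludes $\mathrm{diam}(W)\leq\mathrm{diam}(\overline{V}^{w^*})=\mathrm{diam}(V)<\varepsilon$, the equality being the standard fact that passing to the $w^*$-closure does not increase norm diameter ($w^*$-lower semicontinuity of the norm). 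Once this is in place, your auxiliary apparatus (the point $y_0$ nearly norming $y_0^*$, the set $U_0$, the shrunken $\delta$) is unnecessary: it belongs to the $M$-ideal argument, where it served precisely to bound the $Y^\perp$-component via the $\ell_1$-sum, a step with no analogue here. Incidentally, your observation that $Px^*\in V_0$ for every $x^*\in W$ is correct, and it even yields a posteriori that $\|x^*-Px^*\|<\varepsilon$ (both lie in $\overline{V_0}^{w^*}$), but this can only be extracted \emph{after} the closure argument, not used to prove it.
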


\begin{proof}
	Let $\varepsilon > 0.$ Since $Y^*$ has $w^*BHP$ then $B_{Y^*}$ has a basic relatively $w^*$ open subset
	$V=\{y^* \in B_{Y^*} : \vert y^*(y_i) - y_0^*(y_i)\vert < \alpha , i= 1,2,...,n\}$ where $y_1,y_2,...y_n \in B_Y$
	of diameter less than $\varepsilon.$
	Consider, relatively $w^*$ open subset of $B_{X^*}$ as, 
	$$W= \{x^* \in B_{X^*} : \vert x^*(y_i) - y_0^*(y_i)\vert < \alpha , i= 1,2,...,n\}$$
	Since $Y$ is a strict ideal in $X,$ we have $B_{X^*}= \overline{B_{Y^*}}^{w^*}$ , hence we have $W \subset \overline{V}^{w^*}$ . Indeed , let $x_0^* \in W$. Then there exists net $(x_\lambda^*)$ in $B_{Y^*}$ that converges to $x_0^*.$  \\
	Now, $$\displaystyle{\lim_{\lambda}} \vert x_\lambda^*(y_i) - y_0^*(y_i)\vert = \vert x_0^*(y_i) - y_0^*(y_i)\vert \quad  \forall i = 1 , 2 ,..., n $$ 
	Since $x_0^* \in W$ so we get a $\lambda_0$ such that $\vert x_\lambda^*(y_i) - y_0^*(y_i)\vert <\alpha$  $\forall \lambda \geqslant \lambda_0$ $\forall i=1,2,..,n.$
	Hence $x_\lambda^* \in V$   $\forall \lambda \geqslant \lambda_0$ and therefore, $x_0^* \in \overline{V}^{w^*}.$
	Thus, $dia(W) \leqslant dia (\overline{V}^{w^*}) = dia (V) < \varepsilon.$
\end{proof}
\begin{remark}
	Similar results hold good for $w^*BSCSP$ and  $w^*BDP.$ For details, see  \cite{BR},  \cite{B2}.
	\end{remark}

However, the converse of these results are not true. We give an example. 
\begin{example}\label{example}
	Let $X=C[0,1].$ Since  $X$ is always a  strict ideal in $X^{**}, C[0,1]$ is also so in its double dual. It is well known that $X^*=L_1[0,1] \oplus _1 Z$, for some subspace $Z$ of $X^*$ with $RNP$ and hence $Z$ has $BDP.$ Since  one component of $X^*$ i.e., $Z$ has $BDP,$ it follows from Proposition 2.4 \cite{BS}, that  $X^*$ has $BDP$ and hence it has $BHP$ and $BSCSP.$  
	We also know from \cite{BS} that a Banach space $X$ has $BDP$ (resp. $BHP$ , $BSCSP$) if and only if $X^{**}$ has $w^*$-$BDP$ (resp. $w^*$-$BHP$, $w^*$-$BSCSP$). 
	Hence  $X^{***}$ has  $w^*$-$BDP$(resp. $w^*$-$BHP$, $w^*$-$BSCSP$). 
	It is also known that every convex combination of $w^*$ slices of $B_{X^*}$ has diameter two ( 
	see  \cite{BGLPRZ} for details). Hence, $X ^*$ does not have $w^*$-$BSCSP$ and consequently it does not have $w^*$-$BHP$ and $w^*$-$BDP.$
	\end{example}

\begin{proposition}\label{bscsp ai ideal}
If $Y$ is an $ai$-ideal in $X$ then $Y$ has $BSCSP$ implies $X$ has $BSCSP.$
\end{proposition}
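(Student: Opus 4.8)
The plan is to mimic the $M$-ideal argument of Proposition \ref{prop 1}, replacing the role of the $L$-projection by the Hahn--Banach extension operator furnished by Proposition \ref{ai}. Suppose $Y$ has $BSCSP$, and fix a convex combination of slices $\sum_{i=1}^n \lambda_i S(B_Y, y_i^*, \alpha_i)$ of $B_Y$ of small diameter $\eta$, with each $y_i^* \in S_{Y^*}$. Let $f : Y^* \to X^*$ be the Hahn--Banach extension operator from Proposition \ref{ai}, and set $x_i^* = f y_i^* \in S_{X^*}$ (the norm is preserved). The candidate convex combination of slices of $B_X$ is $\sum_{i=1}^n \lambda_i S(B_X, x_i^*, \alpha_i')$, where each $\alpha_i' < \alpha_i$ is chosen slightly smaller; I will show this set has diameter controlled by $\eta$.

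First I would fix $\varepsilon' > 0$ small (independently of everything that follows), the finite-dimensional subspace $F = \mathrm{span}\{y_1^*, \ldots, y_n^*\} \subset Y^*$, and numbers $\alpha_i'$ subject to $(1-\alpha_i')/(1+\varepsilon') > 1 - \alpha_i$, which is possible for $\varepsilon'$ small. Now take any two points $x^1 = \sum_i \lambda_i u_i$ and $x^2 = \sum_i \lambda_i v_i$ of the candidate combination, with $u_i, v_i \in S(B_X, x_i^*, \alpha_i')$. Applying Proposition \ref{ai} to $\varepsilon'$, $F$, and the finite-dimensional space $E = \mathrm{span}\{u_i, v_i : 1 \le i \le n\}$ yields an operator $T : E \to Y$ satisfying conditions (i)--(iii). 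The crucial use is condition (iii): since $u_i \in E$ and $y_i^* \in F$,
$$ y_i^*(T u_i) = f y_i^*(u_i) = x_i^*(u_i) > 1 - \alpha_i',$$
while the norm condition (ii) gives $\|T u_i\| \le 1 + \varepsilon'$. Hence $w_i := T u_i / (1+\varepsilon') \in B_Y$ satisfies $y_i^*(w_i) > (1-\alpha_i')/(1+\varepsilon') > 1 - \alpha_i$, so $w_i \in S(B_Y, y_i^*, \alpha_i)$; likewise $w_i' := T v_i/(1+\varepsilon') \in S(B_Y, y_i^*, \alpha_i)$.

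Then $\sum_i \lambda_i w_i$ and $\sum_i \lambda_i w_i'$ both lie in the given small-diameter combination of $B_Y$, so $\|\sum_i \lambda_i(w_i - w_i')\| < \eta$. By linearity of $T$ this quantity equals $\frac{1}{1+\varepsilon'}\|T(x^1 - x^2)\|$, and the lower estimate in (ii) gives $\|x^1 - x^2\| \le (1+\varepsilon')\|T(x^1-x^2)\| < (1+\varepsilon')^2 \eta$. Since $\varepsilon'$, the $\alpha_i'$ and $F$ were fixed before choosing the pair, this bound is uniform over all pairs, so the constructed combination of slices of $B_X$ has diameter at most $(1+\varepsilon')^2 \eta$; letting $\eta \to 0$ (using $BSCSP$ of $Y$) produces combinations of arbitrarily small diameter, i.e. $X$ has $BSCSP$.

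The main obstacle is conceptual rather than computational: the extension operator $T$ from Proposition \ref{ai} is only available on a prescribed finite-dimensional subspace and genuinely depends on that subspace, so $T$ changes with the pair $(x^1,x^2)$ being compared. The point to get right is that the parameters $\varepsilon'$, $F$ and $\alpha_i'$ --- and hence the final estimate $(1+\varepsilon')^2\eta$ --- are chosen independently of the pair, with only $E$ (and thus $T$) varying; this is what makes the resulting diameter bound uniform. One must also note that the slices $S(B_X, x_i^*, \alpha_i')$ are nonempty, which is immediate since $\|x_i^*\|=1$.
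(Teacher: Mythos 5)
Your proof is correct and follows essentially the same route as the paper's: both transfer the small combination of slices from $B_Y$ to $\sum_i \lambda_i S(B_X, fy_i^*,\cdot)$ via the Hahn--Banach extension operator $f$ and pull pairs of points back into the $Y$-combination using the finite-dimensional operator $T$ of Proposition~\ref{ai}, absorbing the $(1+\varepsilon')$-losses by a scaling trick. The only differences are cosmetic: the paper argues by contradiction and scales the $X$-points before applying $T$, whereas you argue directly, shrink the slice parameters $\alpha_i'$ in advance, and scale the images $Tu_i$ instead --- and your explicit remark that $\varepsilon'$, $F$, $\alpha_i'$ are fixed while only $E$ (hence $T$) varies with the pair correctly isolates the uniformity point on which the argument hinges.
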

\begin{proof}
Suppose $X$ does not have $BSCSP.$ Then there exists $\varepsilon > 0,$  such that every convex combination of slices of $B_X$ has diameter greater than $\epsilon.$  Since $Y$ has $BSCSP$ , for $\e> 0$ there exists a convex combination of slices, $S=\sum_{i=1}^{n} \lambda_i S_{i}(B_Y,y_i^*,\alpha_i)$ of $B_Y,$
where  $\alpha_i > 0$ $\forall i=1,2,..n$ , $1 \geq \lambda i > 0, \sum_{i=1}^{n} \lambda_i =1 ,y_i^*\in S_{Y^*}$  $\forall i=1,2,...,n$  and $diam(S)<\frac{\varepsilon}{4}.$
Since $Y$ is an $ai$-ideal in $X,$ there exists a  Hahn Banach extension operator, $f : Y^* \rightarrow  X^*$ satisfying the conditions of Proposition $\ref{ai}$.
Consider $\bar{S}=\sum_{i=1}^{n} \lambda_i S_i(B_X,fy_i^*,\alpha_i).$
Since $\bar{S}$ is a convex combination of slices of $B_X$,  $diam(\tilde{S}) >\varepsilon.$ Hence there exists $x=\sum_{i=1}^{n} \lambda_i x_i$ , $\tilde{x}=\sum_{i=1}^{n} \lambda_i \tilde{x_i} \in \bar{S}$ such that $\Vert x - \tilde{x}\Vert >\varepsilon.$
Now we can choose $0<\alpha<1$  such that $\frac{x_i}{1+\alpha} \in S_i(B_X,fy_i^*,\alpha_i)$ and $\frac{\tilde{x_i}}{1+\alpha} \in S_i(B_X,fy_i^*,\alpha_i)$ $\forall i=1,2,...,n.$
Put $x'=\sum_{i=1}^{n} \lambda_i \frac{x_i}{1+\alpha}=\frac{x}{1+\alpha}$ , $\tilde{x}'=\sum_{i=1}^{n} \lambda_i \frac{\tilde{x_i}}{1+\alpha}=\frac{\tilde{x}}{1+\alpha} \in \bar{S}.$
Also , $\Vert x' - \tilde{x}'\Vert =\frac{\Vert x - \tilde{x}\Vert}{1+\alpha}>\frac{\varepsilon}{1+\alpha}$.
Put $E= span\{x',\tilde{x}',\frac{x_1}{1+\alpha},\frac{x_2}{1+\alpha},...\frac{x_n}{1+\alpha},\frac{\tilde{x_1}}{1+\alpha},\frac{\tilde{x_2}}{1+\alpha},...\frac{\tilde{x_n}}{1+\alpha}\}$ and $F= span\{y_1^*,y_2^*,...y_n^*\}.$
For  $E,F$ and $\alpha$ there exists   $T:E\rightarrow Y$ satisfying the conditions in Proposition $\ref{ai}.$ 
By  Proposition $\ref{ai}$ (ii), $\Vert Tx'\Vert\leqslant (1+\alpha)\Vert x'\Vert\leqslant 1.$ Similarly $\Vert T\tilde{x}'\Vert\leqslant 1.$\\
Again, by  Proposition $\ref{ai}$ (ii), $$\Vert Tx'-T\tilde{x}'\Vert \geqslant \frac{\Vert x'-\tilde{x}'\Vert}{1+\alpha}>\frac{1}{1+\alpha} \frac{\varepsilon}{1+\alpha}>\frac{\varepsilon}{4}$$
Also, $\forall i=1,2,...,n,$ by, Proposition $\ref{ai}$ (iii),
$$ y_i^*(T\frac{x_i}{1+\alpha})=fy_i^*(\frac{x_i}{1+\alpha})>1-\alpha_i$$
Which implies  $Tx'\in S.$ Similarly $T\tilde{x}'\in S,$ a contradiction as $diam \ S<\frac{\varepsilon}{4}$
\end{proof}
Arguing similarly as above for $n=1$, we have, 
\begin{corollary}\label{bdp ai ideal}
If $Y$ is an $ai$-ideal in $X$ then $Y$ has $BDP$ implies $X$ has $BDP.$
\end{corollary}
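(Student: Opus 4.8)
The plan is to argue by contradiction, specializing the proof of Proposition~\ref{bscsp ai ideal} to a single slice (the case $n=1$). Suppose $X$ fails $BDP$; then there is $\varepsilon>0$ such that every slice of $B_X$ has diameter greater than $\varepsilon$. Since $Y$ has $BDP$, for this $\varepsilon$ there is a slice $S=S(B_Y,y^*,\alpha_0)$ of $B_Y$, with $y^*\in S_{Y^*}$, such that $diam(S)<\frac{\varepsilon}{4}$.

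Next I would invoke Proposition~\ref{ai} to fix a Hahn--Banach extension operator $f:Y^*\rightarrow X^*$ with the stated local properties, and transfer the slice to $X$ by forming $\bar S=S(B_X,fy^*,\alpha_0)$. Since this is a slice of $B_X$, the failing hypothesis forces $diam(\bar S)>\varepsilon$, so I can pick $x,\tilde x\in\bar S$ with $\Vert x-\tilde x\Vert>\varepsilon$. Choosing $0<\alpha<1$ small enough that $x'=\frac{x}{1+\alpha}$ and $\tilde x'=\frac{\tilde x}{1+\alpha}$ remain in $\bar S$, I obtain $\Vert x'-\tilde x'\Vert>\frac{\varepsilon}{1+\alpha}$.

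I would then set $E=span\{x',\tilde x'\}$ and $F=span\{y^*\}$ and apply Proposition~\ref{ai} to produce $T:E\rightarrow Y$ satisfying (i)--(iii). Condition (ii) gives $\Vert Tx'\Vert,\Vert T\tilde x'\Vert\leqslant(1+\alpha)\Vert x'\Vert\leqslant 1$, so $Tx',T\tilde x'\in B_Y$, and also $\Vert Tx'-T\tilde x'\Vert\geqslant\frac{\Vert x'-\tilde x'\Vert}{1+\alpha}>\frac{\varepsilon}{(1+\alpha)^2}>\frac{\varepsilon}{4}$, the last inequality because $\alpha<1$. Condition (iii), together with $x'\in\bar S$, gives $y^*(Tx')=fy^*(x')>1-\alpha_0$, so that $Tx'\in S$, and likewise $T\tilde x'\in S$.

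The routine part is the dictionary translation from the $n$-term convex combination of Proposition~\ref{bscsp ai ideal} down to a single slice; the one point to verify with care is the chain of factors $\frac{1}{(1+\alpha)^2}$ produced by composing the scaling in $x'$ with the lower estimate from (ii), and checking that the margin $\frac{\varepsilon}{4}$ imposed on $diam(S)$ is exactly what survives it. Since both $Tx'$ and $T\tilde x'$ lie in $S$ while $\Vert Tx'-T\tilde x'\Vert>\frac{\varepsilon}{4}>diam(S)$, we reach the desired contradiction, and therefore $X$ has $BDP$.
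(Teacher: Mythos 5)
Your proof is correct and is exactly what the paper intends: the paper gives no separate argument for this corollary, stating only that it follows by ``arguing similarly as above for $n=1$,'' and your write-up is precisely that specialization of Proposition~\ref{bscsp ai ideal} to a single slice, with the $\frac{\varepsilon}{(1+\alpha)^2}>\frac{\varepsilon}{4}$ bookkeeping carried out correctly. No gaps.
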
 
\begin{proposition}\label{bhp ai ideal}
If $Y$ is an $ai$-ideal in $X$ then $Y$ has $BHP$ implies $X$ has $BHP.$
\end{proposition}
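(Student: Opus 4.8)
The plan is to mimic the proof of Proposition~\ref{bscsp ai ideal} (the $BSCSP$ case), but working with a single basic relatively weakly open set in place of a convex combination of slices. I would argue by contradiction: assume $X$ fails $BHP$, so there is an $\varepsilon>0$ such that every nonempty relatively weakly open subset of $B_X$ has diameter greater than $\varepsilon$. Since $Y$ has $BHP$, I can find a basic relatively weakly open subset of $B_Y$, say
$$V=\{y\in B_Y : |y_i^*(y-y_0)|<\alpha,\ i=1,2,\ldots,n\},$$
with $y_i^*\in S_{Y^*}$, $y_0\in B_Y$, and $\operatorname{diam}(V)<\varepsilon/4$ (the exact fraction to be tuned at the end). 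By Proposition~\ref{ai}, since $Y$ is an $ai$-ideal there is a Hahn--Banach extension operator $f:Y^*\to X^*$ with the local approximation property stated there.

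Next I would transport $V$ up to $X$ using the extended functionals. Consider the relatively weakly open subset of $B_X$ given by
$$\bar V=\{x\in B_X : |fy_i^*(x)-y_0^*(y_0)\text{-type center}|<\alpha,\ i=1,2,\ldots,n\},$$
more precisely centered using the values $fy_i^*$ evaluated appropriately so that $\bar V$ is nonempty (one needs a point of $B_X$ inside it; the image of $y_0$ under $J$ or a witness from $V$ itself will serve). Because $X$ fails $BHP$, $\operatorname{diam}(\bar V)>\varepsilon$, so I can pick $x,\tilde x\in\bar V$ with $\|x-\tilde x\|>\varepsilon$. I would then introduce a small scaling parameter $0<\alpha<1$ and set $x'=x/(1+\alpha)$, $\tilde x'=\tilde x/(1+\alpha)$, keeping both in $\bar V$ and with $\|x'-\tilde x'\|>\varepsilon/(1+\alpha)$. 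Now form the finite dimensional spaces $E=\operatorname{span}\{x',\tilde x'\}$ and $F=\operatorname{span}\{y_1^*,\ldots,y_n^*\}$ and invoke Proposition~\ref{ai} to obtain $T:E\to Y$ satisfying the three conditions.

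Finally I would push $x',\tilde x'$ back into $Y$ via $T$. Condition (ii) gives $\|Tx'\|,\|T\tilde x'\|\le 1$ and the reverse lower estimate $\|Tx'-T\tilde x'\|\ge \|x'-\tilde x'\|/(1+\alpha)$, which with the scaling keeps the separation bounded below by roughly $\varepsilon/4$. Condition (iii) gives $y_i^*(Tx')=fy_i^*(x')$, so the membership test $|fy_i^*(x')-\text{center}|<\alpha$ translates into the corresponding test $|y_i^*(Tx'-y_0)|<\alpha$ (using (i) on $y_0$, or approximating the center), placing $Tx'$ and $T\tilde x'$ inside $V$. This forces two points of $V$ at distance exceeding $\varepsilon/4$, contradicting $\operatorname{diam}(V)<\varepsilon/4$, and the contradiction completes the proof.

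The main obstacle, I expect, is handling the \emph{center} of the weakly open set correctly. In the $BSCSP$/slice argument one only needs a one-sided inequality $y_i^*(\cdot)>1-\alpha_i$, which condition (iii) preserves cleanly; but a weakly open set is defined by the two-sided condition $|y_i^*(y-y_0)|<\alpha$, so I must ensure that the reference point $y_0$ (and its image) is faithfully tracked through $f$ and $T$. Concretely, condition (iii) of Proposition~\ref{ai} only equates $fy_i^*(e)$ with $y_i^*(Te)$ for $e\in E$, so $y_0$ must either be incorporated into $E$ or else the center must be expressed purely in terms of the functional values $y_i^*(y_0)$ (which are fixed scalars, hence harmless). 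Getting the $\varepsilon$-bookkeeping between $\alpha$ (the defining radius) and $\alpha$ (the approximation parameter) straight — they play different roles and should really be denoted differently — is the delicate part; once the center is pinned down, the scaling and the two inequalities from Proposition~\ref{ai} close the argument exactly as in the $BSCSP$ case.
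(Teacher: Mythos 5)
Your proposal is correct and takes essentially the same route as the paper's proof: argue by contradiction, transport the defining functionals of the weakly open set via the Hahn--Banach extension operator $f$ from Proposition~\ref{ai}, scale the two far-apart points of the lifted set by $1/(1+\alpha)$, and pull them back with a local operator $T$ on $E=\operatorname{span}\{y_0,\tilde{x}_1,\tilde{x}_2\}$ --- and your resolution of the ``center'' issue (put $y_0$ into $E$ so that condition (i) gives $Ty_0=y_0$ and condition (iii) gives $\vert y_i^*(T\tilde{x}_j-y_0)\vert=\vert fy_i^*(\tilde{x}_j-y_0)\vert<\delta$) is exactly the paper's device. The only differences are cosmetic: the paper writes the radius as $\delta$, avoiding your double use of $\alpha$, and nonemptiness of the lifted set is witnessed simply by $y_0\in B_Y\subset B_X$ (no appeal to $J$ is needed).
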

\begin{proof}
Suppose  $X$ does not have  $BHP.$ Then there exists $\varepsilon > 0$ such that every relatively weakly open subset of $B_X$ has diameter greater than $\varepsilon.$ Since $Y$ has $BHP$, for  $\varepsilon > 0$,  $B_Y$ has a relatively weakly open subset $U=\{y\in B_Y : \vert y_i^*(y-y_0)\vert <\delta , i=1,2,...,n\}$
where $y_0\in B_Y$, $\delta> 0$, $y_i^*\in Y^*$  $\forall i=1,2,...,n$  and $diam \ (U)<\frac{\varepsilon}{4}.$  Since $Y$ is an $ai$-ideal in $X,$ there exists a  Hahn Banach extension operator $f : Y^* \rightarrow  X^*$ as in Proposition $\ref{ai}$.
Consider $V=\{x\in B_X : \vert fy_i^*(x-y_0)\vert <\delta , i=1,2,..,n\}.$
$V$ is a relatively weakly open subset of $B_X,$ hence $diam \ (V) >\varepsilon.$ So, there exists $x_1,x_2\in V$ such that $\Vert x_1 - x_2\Vert >\varepsilon.$ 
 Choose  $0<\alpha<1$  such that 
$\tilde{x_1} = \frac{x_1}{1+\alpha} \in V$ and $\tilde{x_2} = \frac{x_2}{1+\alpha} \in V.$  Also , $\Vert \tilde{x_1} - \tilde{x_2}\Vert =\frac{\Vert x_1 - x_2\Vert}{1+\alpha}>\frac{\varepsilon}{1+\alpha}.$  Put $E= span\{y_0,\tilde{x_1},\tilde{x_2}\}$ and $F= span\{y_1^*,y_2^*,...y_n^*\}.$ 
For $E,F$ and $\alpha$ there exists   $T:E\rightarrow Y$ as in Proposition $\ref{ai}.$ 
By  Proposition $\ref{ai}$ (ii) , $\Vert T\tilde{x_1}\Vert\leqslant (1+\alpha)\Vert \tilde{x_1}\Vert\leqslant 1.$ Similarly $\Vert T\tilde{x_2}\Vert\leqslant 1.$\\
Again by (ii) Proposition $\ref{ai}$, $$\Vert T\tilde{x_1}-T\tilde{x_2}\Vert \geqslant \frac{\Vert \tilde{x_1}-\tilde{x_2}\Vert}{1+\alpha}>\frac{1}{1+\alpha} \frac{\varepsilon}{1+\alpha}>\frac{\varepsilon}{4}$$
Also $\forall i=1,2,...,n,$ by Proposition $\ref{ai}$ (i) and (iii),
$$\vert y_i^*(T\tilde{x_1} - y_0)\vert = \vert y_i^*(T\tilde{x_1} - Ty_0)\vert = \vert fy_i^*(\tilde{x_1}-y_0)\vert<\delta$$
Thus $T\tilde{x_1}\in U.$ Similarly $T\tilde{x_2}\in U,$  a contradiction since $diam \ U<\frac{\varepsilon}{4}$
\end{proof}

\begin{Acknowledgement}
The second  author's research is funded by the National Board for Higher Mathematics (NBHM), Department of Atomic Energy (DAE), Government of India, Ref No: 0203/11/2019-R$\&$D-II/9249.
\end{Acknowledgement}

\end{document}